\documentclass[12pt,a4paper,english]{smfart}
\usepackage[english]{babel}
\usepackage{ragged2e}
\usepackage{smfthm, mathabx}
\usepackage{stmaryrd}
\usepackage{amssymb}
\usepackage{amsmath}
\usepackage{amsthm}
\usepackage{amsfonts}
\usepackage{graphicx}
\usepackage{enumerate}
\usepackage{comment}

\usepackage{tikz-cd}

\usepackage{appendix}

\usepackage{euscript,mathrsfs}
\usepackage[utf8]{inputenc} 
\usepackage{longtable}
\usepackage{dsfont}

\usepackage[OT2,T1]{fontenc}

\usepackage[all,cmtip]{xy}
\usepackage{alltt}

\usepackage{calrsfs}

\makeatletter
\def\thm@space@setup{
  \thm@preskip=10pt 
  \thm@postskip=10pt 
}
\makeatother

\xyoption{all}

\usepackage{float}

\usepackage{fullpage}

\usepackage{color}

\newtheorem{theorem}{Theorem}

\newtheorem*{Remark}{Remark}

\newtheorem*{Definition}{Definition}

\def\Aut{{\rm Aut}}
\def\ker{{\rm ker}}

\def\Gg{{\mathfrak g}}

\def\so{{\mathfrak s}{\mathfrak{o}}}

\def\sl{{\mathfrak s} {\mathfrak l}}
\def\gl{{\mathfrak g} {\mathfrak l}}

\def\Sl{{\rm SL}}
\def\Gl{{\rm GL}}
\def\SO{{\rm SO}}
\def\OO{{\rm O}}

\title{Construction of non-odd Galois representations with large image}

\author{Donghyeok Lim}
\address{Department of Mathematics Education, Korea National University of Education, Cheongju 28173, South Korea}
\email{donghyeokklim@gmail.com}

\author{Christian Maire}
 \address{Université Marie et Louis Pasteur,  CNRS, Institut FEMTO-ST, F-25000 Besançon, France}
\email{christian.maire@univ-fcomte.fr}

\date{\today}

\subjclass{11R37, 11R32}
\keywords{Galois representations, non-regular at infinity, open image,
 $\Z_p$-Lie algebras.}

\thanks{This work was carried out during visiting positions held by the second author at Korea National University of Education. These visits were partially funded by the French-Korean International Research Network in Mathematics (CNRS).  The first author was supported by the National Research Foundation of Korea (NRF) grants No.
RS-2024-00462910. The second author was also
partially supported by the EIPHI Graduate School (contract "ANR-17-EURE-0002") and by the Bourgogne-Franche-Comté Region.}

\newcommand{\Q}{\mathbb{Q}}
\newcommand{\F}{\mathbb{F}}
\newcommand{\Z}{\mathbb{Z}}
\newcommand{\NN}{\mathbb{N}}

\def\Ker{{\rm Ker}}

\def\log{{\rm log}}

\def\Gal{{\rm Gal}}

\def\O{{\mathcal O}}

\def\Ll{{\mathcal L}}

\def\AA{{\mathbb A}}

\def\rk{{\rm rk}}

\def\1{{\bf 1}}

\def\L{{\rm L}}

\def\so{{\mathfrak s}{\mathfrak{o}}}

\def\sl{{\mathfrak s} {\mathfrak l}}
\def\gl{{\mathfrak g} {\mathfrak l}}

\begin{document}

\begin{abstract} In this work, we construct Galois representations with large image that are not $\Gl_r$-odd (or, equivalently, not regular at infinity). More precisely, for every prime $p\geq 3$, every integer $r\geq 2$, and every integer $a\in[(1-(-1)^r)/2,\,r-2]$ satisfying $a\equiv r \pmod 2$, we construct a continuous Galois representation $\rho: G_\Q \rightarrow \Gl_r(\Q_p)$ whose image is commensurable with $\Gl_r(\Z_p)$ and satisfies $|{\rm tr}(\rho(c))|=a$, where $c$ denotes a complex conjugation. We also obtain analogous results for the orthogonal group $\OO_r(\Q_p)$.
\end{abstract}

\maketitle

\section{Introduction}

Let $G_\Q$ be the absolute Galois group of $\Q$, and let $p$ be a prime number.
Since the second half of the twentieth century, (continuous) Galois representations $\rho: G_\Q \rightarrow \Gl_r(\Q_p)$ have played a central role in number theory, particularly when they arise from geometry. A key object is the image $\rho(c)$ of (a class of) complex conjugation~$c$. For instance, if $\rho_E$ arises from the action of $G_\Q$ on the Tate module $T_p(E)$ of an elliptic curve~$E$, then $\rho_E$ is \emph{odd}, meaning that ${\rm det}(\rho(c))=-1$.

\medskip

Let $K$ be a number field and let $G_K$ denote its absolute Galois group.  We introduce the following invariant associated with a Galois representation $\rho : G_K \rightarrow \Gl_r(\Q_p)$:
$$i(\rho)=\max_{v \ real} |{\rm tr}(\rho(c_v))|,$$
where $c_v$ denotes a complex conjugation at the real place $v$. Since the eigenvalues of $\rho(c_v)$ are $\pm1$, we have $i(\rho)\le r$ and $i(\rho)\equiv r \pmod 2$.

\medskip

For Galois representations of dimension $r>2$, one is naturally led to consider generalizations of the notion of oddness. We refer the reader to Calegari's note \cite{Calegari2}, based on lectures given at the Institut Henri Poincaré in 2010. See also Gross's unpublished note \cite{Gross}. In the present work, we focus on the following notion.

\begin{Definition} 
   A Galois representation $\rho: G_K \rightarrow \Gl_r(\Q_p)$ is said to be \textit{regular at infinity} (or $\Gl_r$-odd) if $i(\rho) \leq 1$.
\end{Definition}

Bellaiche-Chenevier \cite{Bellaiche-Chenevier}, Taylor \cite{Taylor}, Calegari  \cite{Calegari1}, Caraiani-Le Hung \cite{Caraiani-LeHung} studied the behavior of $i(\rho)$ when $K$ is totally real and $\rho$ arises from an automorphic form. In all these works, $\rho$ is regular at infinity.

In this context, Khare--Larsen \cite[Appendix A]{Khare-Larsen} investigated the relationship between regularity at infinity and geometric lifts of mod $p$ Galois representations. In particular, they suggest that any geometric lifting of a mod $p$ Galois representation that is regular at all places above~$p$ (i.e. distinct Hodge-Tate weights for all places above $p$) should also be regular at infinity.

\medskip

On the other hand, one may ask how to construct Galois representations with open image. Greenberg \cite{Greenberg} initiated a Galois-theoretic approach to this question for representations valued in $\Gl_r(\Z_p)$ with $r\ge3$, motivated in part by the difficulty of producing such representations by geometric means. This line of research was subsequently developed by Ray \cite{Ray_2} \cite{Ray}, Cornut-Ray \cite{Cornut-Ray}, Maire \cite{Maire}, Tang \cite{Tang}, etc.
In particular, \cite{Maire} proves that for every $p>2$ and every $r\geq 1$, there exists a Galois representation $\rho: G_\Q \rightarrow \Gl_r(\Z_p)$ with open image. These constructions rely on group-theoretic methods rather than automorphic ones, although it seems nontrivial to prove that the resulting representations are not geometric.

Meanwhile, Katz \cite{Katz} gave an alternative construction of Galois representations with open image in $\Gl_r(\Z_p)$, for $r\geq 6$ and $p\equiv 1 \pmod 3$ or $p \equiv 1 \pmod 4$, using the Jacobian varieties of some explicit curves. However, in all the cases mentioned above, the resulting Galois representations are regular at infinity.

\medskip

In this work, we construct Galois representations with large image that are not regular at infinity.
\medskip

Let $\omega : G_\Q \rightarrow \Gl_1(\Z_p)$ be the cyclotomic character.
For an integer $r$, set $\epsilon(r)=\frac{1-(-1)^r}{2}$.

\smallskip

\begin{theorem}\label{theo-sl}
For every prime $p\geq 3$, every $r\geq 2$, and every integer $a$ satisfying $\varepsilon (r) \leq a \leq   r-2$ and $a\equiv r \ ({\rm mod} \ 2)$, there exists a continuous Galois representation $\rho : G_\Q \rightarrow \Gl_r(\Z_p)$ unramified outside $\{p,\infty\}$ if $p\equiv 3$ mod $4$ (resp. outside $\{2,p,\infty\}$ if $p\equiv 1 $ mod $4$) such that:
\begin{itemize}
\item[$(1)$]  $ i(\rho)=a$;
\item[$(2) $] The image of $\rho$ and $\Sl_r(\Z_p)$ are commensurable. More precisely, let $k, m,  n \in \NN$ with  $ m\geq n\geq 1$ be such that 
\begin{equation*}
 m+n=r, \quad m-n=a,  \quad p^k > m(m+2)(m+3), 
\end{equation*}
then $\rho(G_\Q)$ contains the principal congruence subgroup $\Sl_r^{4kmn-2k+2}(\Z_p)$ as an open subgroup;
\item[$(3)$] The Galois representation $\rho\otimes \omega : G_\Q \rightarrow \Gl_r(\Z_p)$  has open image with $i(\rho\otimes \omega)=a$.
\end{itemize} 
\end{theorem}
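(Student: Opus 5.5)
The plan is to reduce the statement to the construction of a Galois extension of $\Q$ whose Galois group is an open subgroup of a suitable $p$-adic analytic group, and where complex conjugation is placed on a prescribed involution. Set $m=(r+a)/2$ and $n=(r-a)/2$. Complex conjugation should act through an element conjugate to $\epsilon=\mathrm{diag}(I_m,-I_n)$, since then $|{\rm tr}\,\rho(c)|=m-n=a$. The Lie algebra $\sl_r(\Z_p)$ splits under conjugation by $\epsilon$ into a $+1$-eigenspace, consisting of the block-diagonal matrices of rank $m^2+n^2-1$, and a $-1$-eigenspace, consisting of the off-diagonal blocks of rank $2mn$. The number $2mn$ is exactly the quantity appearing in the exponent $4kmn-2k+2$. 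The idea is to realize a uniform open subgroup $H$ of $\Sl_r(\Z_p)$, normalized by $\epsilon$, as a quotient of a Galois group $G_S(F)$ on which $\epsilon$ acts via complex conjugation.

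\textbf{Step 1 (base field and the $\epsilon$-action).} Take the pro-$p$ group $G_S$ of the maximal pro-$p$ extension of a small base field, unramified outside $S=\{p,\infty\}$, or outside $\{2,p,\infty\}$ when $p\equiv1$ mod $4$. The base field could be $\Q$ itself, or a quadratic field that supplies enough generators. Complex conjugation $c$ acts on $G_S$. Because $p$ is odd, by Schur--Zassenhaus we can study $G_S\rtimes\langle c\rangle$ through its $\pm$-eigenspaces. Using the classical Shafarevich / Koch presentations, the generator rank of the $-1$-part of $G_S^{ab}$ (respectively of its Frattini quotient) is computed from the units and the real places. This gives a free pro-$p$ quotient in which $c$ acts with exactly as many $-1$-generators as needed. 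The extra prime $2$ for $p\equiv 1$ mod $4$ should be what supplies the missing generators in that case. The main target is a continuous surjection $G_S\twoheadrightarrow H$ that is $c$-equivariant for $\epsilon$-conjugation on $H$.

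\textbf{Step 2 (lifting via free groups and uniform groups).} Take $H=\Sl_r^{k'}(\Z_p)$ for a suitable level $k'$. This group is uniform, and its Frattini quotient is $\sl_r(\F_p)$ with its $\epsilon$-eigenspace splitting. Since $H$ is generated by its $\epsilon$-eigenvectors, it suffices to produce a $c$-equivariant surjection from a free pro-$p$ group $F$ with $c$-action onto $H$. Such a map lifts along $G_S\to F$ if $G_S$ itself surjects equivariantly onto a free pro-$p$ group of the right $\pm$-ranks. The most delicate part is obtaining that free quotient. In general $G_S$ is not free, so I would instead use a Gras--Munnier / Greenberg-type choice of a ramification set $T$ such that the relations of $G_T$ die in $H$. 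Concretely, I would choose generators $\sigma_i$, namely Frobenius or tame inertia lifts, and check that their images land in an appropriate $\epsilon$-eigencomponent of $\Sl_r^{k}(\Z_p)$. The condition $p^k>m(m+2)(m+3)$ should guarantee that finitely many explicit matrices of the form $I+p^kX$, with $X$ chosen in the eigenspaces, generate an open subgroup of level $4kmn-2k+2$. This is the step I expect to be the real obstacle. The route I would take is a commutator computation in the Lie algebra showing that iterated brackets of the $-1$-part together with a few diagonal elements span $p^{\text{(bounded)}}\sl_r(\Z_p)$. The accumulated loss in $p$-adic valuation is what produces the explicit exponent.

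\textbf{Step 3 (conclusion).} Compose with $H\subset \Gl_r(\Z_p)$ to obtain $\rho$. Then $\rho(c)$ is conjugate to $\epsilon$, which gives $i(\rho)=a$, and the image contains $\Sl_r^{4kmn-2k+2}(\Z_p)$, which gives (2). The ramification is controlled by $S$. For (3), twisting by $\omega$ multiplies $\det$ by $\omega^r$. The cyclotomic character is unramified outside $p$ and has open image. The image of $\rho\otimes\omega$ then contains the open subgroup of $\Sl_r$ together with scalars $\omega(g)$ whose $r$-th powers form an open subgroup of $\Z_p^\times$, so the image is open in $\Gl_r(\Z_p)$. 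Finally, $\omega(c)=-1$, so ${\rm tr}(\rho\otimes\omega)(c)=-{\rm tr}\,\rho(c)$ and the absolute value $a$ is preserved.
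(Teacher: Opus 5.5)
Your framework is the right one: an equivariant surjection from a free pro-$p$ Galois group with involution onto a $p$-adic analytic group normalized by $\epsilon=\mathrm{diag}(I_m,-I_n)$, with $c\mapsto\epsilon$. But Step~2 fails at its first choice, $H=\Sl_r^{k'}(\Z_p)$, because the Galois groups you propose have too few $c$-invariant generators. Taking $\Q$ itself is useless, since $G_{\Q,S}(p)\cong\Z_p$. For an imaginary quadratic $K$ with $\Delta=\Gal(K/\Q)$ and $S\subseteq\{2,p,\infty\}$, inflation--restriction ($|\Delta|=2$ is prime to $p$) gives $H^1(G_{K,S},\F_p)^{\Delta}\cong H^1(G_{\Q,S},\F_p)$. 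By Kronecker--Weber this is the single line coming from the cyclotomic $\Z_p$-extension.

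An equivariant surjection onto $\Sl_r^{k'}(\Z_p)$ would force the $\epsilon$-fixed part of its Frattini quotient $\sl_r(\F_p)$, which has dimension $m^2+n^2-1$, to be a quotient of that line. This is impossible as soon as $r\geq 3$. For $K=\Q(\sqrt{-p})$ it is worse still: $G_{K,p}$ is free on two generators with character $\mathbf{1}+\varphi$, so every quotient is $2$-generated, while $\Sl_r^{k'}(\Z_p)$ has generator rank $r^2-1$. Your fallback of adding an auxiliary set $T$ of primes (Gras--Munnier or Greenberg style) introduces ramification outside $\{2,p,\infty\}$, so it proves a different statement. The prime $2$ does not supply generators either. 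It appears only because $\Q(\sqrt{-p})/\Q$ ramifies at $2$ when $p\equiv 1 \bmod 4$, and no prime above $2$ can then carry pro-$p$ ramification, since $p\nmid 2^f-1$ for $f\leq 2$ and $p\geq 5$.

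The missing idea is to shrink the target instead of enlarging the source. Take $K=\Q(\sqrt{-p})$ and aim at a $2$-generated open subgroup $H=\langle\exp(p\alpha),\exp(p\beta)\rangle$ with $\alpha\in\sl_r^+$ and $\beta\in\sl_r^-$. Then $\epsilon$ fixes one generator and inverts the other, so $H^{p,el}$ has character $\mathbf{1}+\varphi$. The lifting theorem for free pro-$p$ groups gives $\Gal(K_p/\Q)\twoheadrightarrow H\rtimes\langle\epsilon\rangle$, and Schur--Zassenhaus puts $\rho(c)$ in the conjugacy class of $\epsilon$.

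All the real work is an involution-compatible Kuranishi pair with explicit index:
\begin{itemize}
\item take $\alpha$ diagonal of trace zero with entries $\lambda_i,\mu_j$ chosen so that the $2mn$ eigenvalues $\pm(\lambda_i-\mu_j)$ of $\mathrm{ad}_\alpha$ on $\sl_r^-$ are pairwise distinct mod $p^k$ (this is exactly where $p^k>m(m+2)(m+3)$ enters);
\item take $\beta$ to be the element whose off-diagonal blocks are all ones;
\item a Vandermonde estimate then gives $\sum_j\Z_p\,\mathrm{ad}_\alpha^j(\beta)\supseteq p^{(k-1)(2mn-1)}\sl_r^-$;
\item the identity $[\sl_r^-,\sl_r^-]=\sl_r^+$ recovers the fixed part.
\end{itemize}
Rescaling by $p$ and taking one bracket produces $\sl_r^{4kmn-2k+2}$, which is the exponent in the statement. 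Your ``finitely many matrices $I+p^kX$'' never confronts the fact that two generators are all you have.

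A smaller point concerns (3). The image of $\rho\otimes\omega$ is not a priori generated by an open subgroup of $\Sl_r$ together with scalars. You need the fact that open subgroups of $\Sl_r(\Z_p)$ have finite abelianization. This shows that $\rho(\mathrm{Ker}\,\omega)$, which lies in that image, is still open in $\Sl_r(\Z_p)$. Combining this with $\det(\rho\otimes\omega)=\pm\omega^r$ then gives an open image in $\Gl_r(\Z_p)$.
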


Recall that 
 two topological groups $G$ and $H$ are said to be  {\it commensurable} if they have a common open subgroup.

\smallskip

In the same spirit as Cornut-Ray \cite{Cornut-Ray},  Tang  \cite{Tang}, and Maletto \cite{Maletto}, we obtain the following result by varying the target group.

\begin{theorem}\label{theo-so}
For every $p\geq 3$, for every $r\geq 2$, for every $a$ with  $\varepsilon (r) \leq a \leq   r-2$ and $a\equiv r \ ({\rm mod} \ 2)$, there exist a continuous Galois representation $\rho : G_\Q \rightarrow \OO_r(\Z_p)$ in the orthogonal group $\OO_r(\Z_p)$,  unramified outside $\{p,\infty\}$ (resp. outside $\{2,p,\infty\}$) if $p\equiv 3$ mod $4$ (resp. $p\equiv 1 $ mod $4$) and such that:
\begin{itemize}
\item[$(1)$] $i(\rho)=a$;
\item[$(2)$] The image of $\rho$ is open. More precisely, let $k, m,  n \in \NN$ with  $ m\geq n\geq 1$ be such that
\begin{equation*} m+n=r, \quad m-n=a, \quad 
p^k > \frac{3n(m+2)}{2} +m, 
\end{equation*}
then $\rho(G_\Q)$ contains the principal congruent subgroup $\SO_r^{2kmn-2k+2}(\Z_p)$ of $\SO_r(\Z_p)$.
\end{itemize}
\end{theorem}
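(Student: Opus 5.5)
The plan is to realize $\rho$ as a homomorphism that is equivariant for complex conjugation, built from a pro-$p$ Galois group on which $c$ acts. Its image will be a closed subgroup of $\SO_r(\Z_p)$ whose $\Q_p$-Lie algebra is all of $\so_r$. Then $\rho(c)$ is chosen to be the element of $\OO_r(\Z_p)$ that preserves the chosen quadratic form and acts as $\mathrm{diag}(1^{(m)},(-1)^{(n)})$.

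\textbf{Step 1 (linear algebra).} Fix $m\ge n\ge 1$ with $m+n=r$ and $m-n=a$. Fix a quadratic form of the shape $q=q_m\perp q_n$ on $\Z_p^r$, for instance the split or sum-of-squares form; when $p\equiv 1$ mod $4$, the prime $2$ will be used to adjust discriminants. Set $\sigma=\mathrm{diag}(1_m,-1_n)\in\OO_r(\Z_p)$, which gives ${\rm tr}(\sigma)=m-n=a$. The adjoint action of $\sigma$ splits $\so_r$ into eigenspaces
\[
\so_r=\mathfrak k\oplus\mathfrak p,\qquad \mathfrak k=\so_m\oplus\so_n,\qquad \mathfrak p\simeq M_{m\times n}.
\]
Here $\mathfrak k$ is the $+1$-eigenspace and $\mathfrak p$ is the $(-1)$-eigenspace, of dimension $mn$. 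We will check that $\mathfrak p$ generates $\so_r$ as a Lie algebra, because $[\mathfrak p,\mathfrak p]=\mathfrak k$; the case $r=2$ is trivial, since then $\so_2=\mathfrak p$. Concretely, take the basis $E_{ij}-E_{ji}$ of $\mathfrak p$, with $i\le m<j$. We bound how many brackets of $p^k$-multiples of these elements are needed to recover a $\Z_p$-lattice containing $p^{s}\so_r(\Z_p)$. This is where the level $2kmn-2k+2$ should come from.

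\textbf{Step 2 (Galois side).} Put $S=\{p,\infty\}$, or $S=\{2,p,\infty\}$ when $p\equiv1$ mod $4$. The prime $2$ is needed in that case so that a real quadratic field, or the sign condition on $\det$, can be matched inside $\OO_r$. Consider $K=\Q(\mu_{p^k})$ and the pro-$p$ group $\mathcal G=\Gal(K_S(p)/K)$; this is the place where the field is tied to the exponent $k$ in the statement. The group $\Delta=\Gal(K/\Q)$, which contains $c$, acts on $\mathcal G$. Using Leopoldt's conjecture for abelian fields (a theorem) and the structure of $\mathcal G^{ab}$, I would show that the $c=-1$ part of $\mathcal G^{ab}\otimes\Q_p$ has rank at least $\varphi(p^k)/2$. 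The hypothesis $p^k>\frac{3n(m+2)}{2}+m$ should be exactly what guarantees enough independent minus generators, while leaving room to kill the relations. Since $\mathcal G$ has cohomological dimension $\le 2$ and only few relations, I would take a free pro-$p$ quotient, or kill relations by a counting argument. This yields a $c$-stable quotient $F$ with $mn$ minus generators $x_1,\dots,x_{mn}$ that are free modulo higher commutators.

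\textbf{Step 3 (defining $\rho$).} Work inside the uniform subgroup $U=\SO_r^{2k}(\Z_p)$ and send each $x_\ell$ to $\exp(p^{k}X_\ell)$, where the $X_\ell$ run through the basis of $\mathfrak p$. Since $\sigma$ inverts these elements, the assignment is $c$-equivariant, so it extends to $\Gal(K_S(p)/\Q)^{\langle c\rangle\text{-part}}\rtimes\langle c\rangle\to\OO_r(\Z_p)$ with $c\mapsto\sigma$. Composing with restriction gives a continuous $\rho$, unramified outside $S$. Because complex conjugations at the real place are all conjugate to $c$, we get $i(\rho)=|{\rm tr}\,\sigma|=a$.

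\textbf{Step 4 (open image).} The image contains the closed subgroup generated by the $\exp(p^kX_\ell)$. By Lazard's correspondence for saturated groups, together with the bracket computation of Step 1 done with Baker--Campbell--Hausdorff, this subgroup contains $\SO_r^{2kmn-2k+2}(\Z_p)$.

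\textbf{Main obstacle.} I expect the hardest step to be Step 2: producing a genuinely free, $c$-equivariant pro-$p$ quotient with enough minus generators. Unlike abelian extensions, freeness requires controlling $H^2$, that is, the relations. My first attempt would be to show that the relation module lies in the plus part, or in the Frattini part, for the chosen $K$ and $S$. If that fails, I would replace freeness by successive lifting through $U/U^{p^j}$, using that the relevant embedding problems are unobstructed in the minus part once $p^k$ is large compared with $mn$.
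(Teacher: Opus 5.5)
\textbf{There is a genuine gap in Steps 2--3: what you build is not a representation of $G_\Q$.}

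\emph{The descent problem.} Your homomorphism on $\mathcal G=\Gal(K_S(p)/K)$, with $K=\Q(\mu_{p^k})$, is only $c$-equivariant. So it extends at best to $\mathcal G\rtimes\langle c\rangle=\Gal(K_S(p)/K^+)$, that is, to $G_{K^+}$ with $K^+=\Q(\mu_{p^k})^+$. There is no restriction map from $G_\Q$ to that group. Descending to $\Q$ requires compatibility with all of $\Gal(K/\Q)$. Its order $p^{k-1}(p-1)$ is not prime to $p$ for $k\ge2$, so neither a splitting nor a coprime lifting argument is available. Nothing in your choice of the $x_\ell$ addresses this.

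\emph{The shape of any admissible $\rho$ is forced.} Suppose $\rho:G_\Q\to U\rtimes\langle\sigma\rangle$ is unramified outside $S$, with $U$ pro-$p$ and $\rho(c)=\sigma$. Then $\ker(G_\Q\to\langle\sigma\rangle)=G_{K_0}$ for an imaginary quadratic field $K_0$ unramified outside $S$. Since $\rho(G_{K_0})\subseteq U$ is pro-$p$, $\rho$ factors through $\Gal(K_{0,S}(p)/\Q)$. For $p\equiv 3\pmod 4$ this forces $K_0=\Q(\sqrt{-p})$. Since $p\nmid h_{K_0}$, $\Gal(K_{0,S}(p)/K_0)$ is free pro-$p$ of rank $2$ with Frattini character $\1+\varphi$. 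Hence $\rho(G_{K_0})$ is generated by one $\sigma$-fixed and one $\sigma$-inverted element. The $\varphi(p^k)/2$ minus directions you count over $\Q(\mu_{p^k})$ are therefore not independent parameters. Looking for a free quotient on $mn$ minus generators is the wrong problem.

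\textbf{What is missing} is the Lie-theoretic input matching this two-generator shape. You need a refinement of Kuranishi's theorem giving $\alpha\in\so_r^+$ and $\beta\in\so_r^-$ (eigenspaces of conjugation by $d_{m,n}=\sigma$) that generate a finite-index $\Z_p$-subalgebra with explicit index. The paper does this as follows.
\begin{itemize}
\item It takes $\alpha$ block-diagonal, with skew-symmetric $2\times2$ diagonal blocks of weights $\lambda_i$ in the first $m$ coordinates and $\mu_j$ in the last $n$.
\item The eigenvalues of $\mathrm{ad}_\alpha$ on $\so_r^-\otimes\Q_p(\sqrt{-1})$ are $\sqrt{-1}$ times those of $\pm\lambda_i\pm\mu_j$, $\pm\lambda_i$, $\pm\mu_j$, $0$ that occur for the parities of $m,n$. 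The eigenbasis has change-of-basis matrix invertible over the integers of $\Q_p(\sqrt{-1})$.
\item The hypothesis $p^k>\frac{3n(m+2)}{2}+m$ is what allows choosing these integers pairwise distinct mod $p^k$. It has nothing to do with $\Q(\mu_{p^k})$.
\item Taking $\beta$ with unit coordinates in that eigenbasis, a Vandermonde estimate gives $\sum_{j<mn}\Z_p\,\mathrm{ad}_\alpha^j\beta\supseteq p^{(k-1)(mn-1)}\so_r^-$.
\item Replacing $\alpha,\beta$ by $p\alpha,p\beta$ and using $[\so_r^-,\so_r^-]=\so_r^+$ yields $\langle p\alpha,p\beta\rangle\supseteq\so_r^{2kmn-2k+2}$. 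This is where the level comes from; your bracket count with generators $p^kX_\ell$ would give $2k$ instead.
\item Then $g_1=\exp(p\alpha)$ and $g_2=\exp(p\beta)$ generate a group $H$ with $H^{p,el}$ of character $\1+\varphi$. This is the same character as $G_{K,p}$ for $K=\Q(\sqrt{-p})$, which is free of rank $2$.
\item The lifting theorem for free pro-$p$ groups under a coprime action then gives $G_{K,p}\rtimes\Gal(K/\Q)\twoheadrightarrow H\rtimes\langle d_{m,n}\rangle\subset\OO_r(\Z_p)$. By Schur--Zassenhaus, $i(\rho)=|{\rm tr}\,d_{m,n}|=a$.
\end{itemize}

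Finally, the prime $2$ enters only because $\Q(\sqrt{-p})$ is ramified at $2$ when $p\equiv1\pmod4$, and no imaginary quadratic field is then unramified outside $p$. It is not there to adjust discriminants or the quadratic form.
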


\begin{Remark} We prove a slightly more general result than those stated above. More precisely, let $p\geq 3$ and let  $K$ be an imaginary quadratic field such that $p\nmid h_K$, where $h_K$ denotes the class number of $K$. Then there exists a Galois representation $\rho: G_\Q \rightarrow \Gl_r(\Z_p)$ satisfying conditions  $(1)-(2)$ of Theorem \ref{theo-sl} (resp. Theorem~\ref{theo-so}),  and such that  $\rho$ factors through the maximal pro-$p$ extension of $K$ unramified outside~$p$.  In particular, the ramification set of $\rho$ consists of~$p$ together with the primes ramified in  $K/\Q$.   
\end{Remark}

This paper is divided into three sections. The next section is devoted to the strategy developed in \cite{Maire}, together with some Lie algebra-theoretic background. Its main result is Theorem \ref{theorem_4.3_bis}, which relies on Kuranishi's theorem stating that every semisimple Lie algebra over a field of characteristic $0$, in particular over $\Q_p$, is generated by two elements $x$ and $y$. To obtain our main results, however, we require a refinement of this theorem. More precisely, fix a partition $r=m+n$ with $m \geq n \geq 1$, and let $A$ be the $r \times r$ diagonal matrix whose first $m$ diagonal entries are $1$ and whose last $n$ diagonal entries are $-1$.
In Section \ref{section_Lie_algebra}, we refine Kuranishi's theorem for the semisimple Lie algebras $\sl_r(\Q_p)$ and $\so_r(\Q_p)$  by taking into account the involutive automorphism induced by conjugation by~$A$.

The final section is devoted to the proofs of the two theorems stated in the introduction.


\section{Group-theoretic tools for lifting Galois representations}

For a finitely generated pro-$p$ group~$N$, denote by $N^{p,el}:=N/N^p[N,N]$ its maximal $p$-elementary quotient. Let $\pi_N: N\rightarrow N^{p,el}$ be the natural projection.

\smallskip

The starting point of our strategy is the following lifting result.

 \begin{theo} \label{theorem_lifting}
 Let $\Gamma \rtimes \Delta $  be a profinite group, where $\Gamma$ is a finitely generated free pro-$p$ group and $\Delta \subset \Aut(\Gamma)$ is a finite group of order coprime to $p$. Let $H$ be a pro-$p$ group equipped with an action of a finite group $\Delta'$, where $\Delta' \cong \Delta$. Fix an isomorphism $\rho_0 : \Delta \xrightarrow{\cong} \Delta'$ and regard $H$ as a $\Delta$-group vis $\rho_0$. Suppose that the $\Delta$-module $H^{p,el}$ is isomorphic to a sub-$\Delta$-module of $\Gamma^{p,el}$. Let $\alpha : \Gamma  \rtimes  \Delta \twoheadrightarrow H^{p,el}\rtimes \Delta'$ be the homomorphism induced by a surjective $\Delta$-homomorphism $\Gamma^{p,el}\twoheadrightarrow H^{p,el}$ and $\rho_0$. Let $\pi:H\rtimes\Delta'\twoheadrightarrow H^{p,el}\rtimes\Delta'$ be the homomorphism determined by $\pi|_H=\pi_H$ and $\pi|_{\Delta'}=\mathrm{id}_{\Delta'}$.
 Then the embedding problem
   $$\xymatrix{   & \Gamma\rtimes \Delta \ar@{.>>}[ld]_{ \psi} \ar@{->>}[d]^-{\alpha} \\
  H\rtimes \Delta'  \ar@{->>}[r]_-\pi &  H^{p,el}  \rtimes \Delta' }$$
 admits a proper continuous solution $\psi$.
 \end{theo}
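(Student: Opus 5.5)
We have to produce a proper (surjective) $\psi$ with $\pi\circ\psi=\alpha$. Since $H\rtimes\Delta'$ may not be finitely generated in a way we control directly, note first that $H$ is finitely generated: $H^{p,el}$ is a quotient of the finite group $\Gamma^{p,el}$, hence finite, and a pro-$p$ group with finite Frattini quotient is finitely generated. The plan is to build $\psi$ on $\Gamma$ as a $\Delta$-equivariant homomorphism $\varphi:\Gamma\to H$ lifting $\alpha|_\Gamma$, then extend by $\psi(\gamma\delta)=\varphi(\gamma)\rho_0(\delta)$. Equivariance $\varphi(\delta\cdot\gamma)=\rho_0(\delta)\cdot\varphi(\gamma)$ is exactly what makes this formula a homomorphism on the semidirect product. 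Surjectivity then follows from the Frattini argument: $\varphi(\Gamma)$ maps onto $H^{p,el}=H/\Phi(H)$, so $\varphi(\Gamma)=H$, and $\psi$ also hits $\Delta'$.

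To construct $\varphi$, I would use the coprimality $p\nmid|\Delta|$ to choose a good $\Delta$-stable basis of $\Gamma$. By Maschke, $\Gamma^{p,el}\simeq V\oplus W$ as $\F_p[\Delta]$-modules, where $\alpha$ restricts to an isomorphism $V\to H^{p,el}$ and $W=\ker$. The standard fact I want is that for a free pro-$p$ group with a coprime action, any $\Delta$-stable subspace of the Frattini quotient lifts to a $\Delta$-stable set generating a closed subgroup. More precisely, there is a $\Delta$-stable ``basis module'': a $\Delta$-equivariant section $s:\Gamma^{p,el}\to\Gamma$ of linear-type whose image of a basis is a free basis of $\Gamma$. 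I would obtain it by averaging: take any lift $\tilde s$ of a basis, and correct in the successive quotients $\Gamma/\Phi^{(i)}$ (lower $p$-series), using $H^1(\Delta,M)=0$ for $\F_p[\Delta]$-modules $M$ to make the lifts equivariant at each finite stage, then pass to the inverse limit by compactness.

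Similarly on the $H$ side, I need a $\Delta'$-equivariant map $t:H^{p,el}\to H$ of sets lifting the identity, again by successive coprime cohomology vanishing through $H/\Phi^{(i)}(H)$ and compactness. Then define $\varphi$ on the free basis: basis elements coming from $V$ go to $t(\alpha(v))$, basis elements coming from $W$ go to $1$. Freeness of $\Gamma$ gives a homomorphism, and equivariance on a $\Delta$-stable free basis set (permuted up to the linear action) gives equivariance of $\varphi$.

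The main obstacle is the equivariance of the basis: $\Delta$ acts linearly on $\Gamma^{p,el}$, not by permuting a basis, so ``equivariant on generators'' is not automatic. I would handle this by working instead with the free pro-$p$ product description: realize $\Gamma$ as the free pro-$p$ group on the $\F_p[\Delta]$-module in the sense of a $\Delta$-equivariant isomorphism $\Gamma\simeq F(X)$ where the lift $s$ satisfies $s(\delta x)=\delta\cdot s(x)$ only modulo $\Phi(\Gamma)$, and then define $\varphi$ via the universal property of the maximal pro-$p$ quotient of the free product, checking equivariance in each finite quotient $\Gamma/\Phi^{(i)}$ where the Schur--Zassenhaus theorem ensures equivariant lifts exist because the obstructions live in $H^2(\Delta,-)$ and $H^1(\Delta,-)$ of $p$-groups, which vanish. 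Alternatively and more cleanly: the equivariant solution exists iff a lift of $\alpha$ to $\Gamma\rtimes\Delta\to H\rtimes\Delta'$ exists; since $\Gamma\rtimes\Delta$ has $p$-cohomological dimension issues only from $\Gamma$ (free, so $cd_p\le1$ for the Sylow), the embedding problem with pro-$p$ kernel is weakly solvable, and Frattini surjectivity makes any weak solution proper.
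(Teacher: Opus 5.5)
Your final route is a correct proof, but it is genuinely different from the paper's; the earlier construction should be dropped.

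\textbf{What does not work.} The construction with the equivariant section $s$ and the free basis $\{s(e_i)\}$, and the variant (a), fail for the reason you yourself identify. Even when $s$ is $\Delta$-equivariant as a map of sets, $\delta(s(e_i))=s(\delta e_i)$ is not the word $\prod_j s(e_j)^{a_{ij}}$. So prescribing $\varphi$ on $\{s(e_i)\}$ gives no control over $\varphi\circ\delta$. Also, in general there is no $\Delta$-set $X$ with $\Gamma\simeq F(X)$ equivariantly (the Frattini quotient would then be a permutation module), so (a) is not an argument either.

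The stage-by-stage idea does work if you run it on the target rather than on $\Gamma$. Put $P_1(H)=H$ and $P_{i+1}(H)=P_i(H)^p[P_i(H),H]$, and let $\varphi_i:\Gamma\to H/P_i(H)$ be an equivariant lift of $\alpha|_\Gamma$. Its lifts to $H/P_{i+1}(H)$ exist because $\Gamma$ is free. They form a torsor under $\mathrm{Hom}(\Gamma,P_i(H)/P_{i+1}(H))$, since that kernel is central. They carry the affine $\Delta$-action $f\mapsto\rho_0(\delta)\circ f\circ\delta^{-1}$. Since $H^1(\Delta,-)=0$, there is a fixed point, i.e. an equivariant lift, and you pass to the limit.

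\textbf{What works.} Your last route is correct. $\Gamma$ is a $p$-Sylow subgroup of $\Gamma\rtimes\Delta$, so $\mathrm{cd}_p(\Gamma\rtimes\Delta)=\mathrm{cd}_p(\Gamma)\le 1$. By Serre's criterion, every embedding problem with finite $p$-group kernel is then weakly solvable. Since $\ker\pi=\Phi(H)$ is infinite, you need to make the limit explicit. Lift $\alpha$ successively through the finite groups $(H/P_i(H))\rtimes\Delta'$. The $P_i(H)$ are characteristic, hence normal in $H\rtimes\Delta'$; they are open because $H$ is finitely generated; and $\bigcap_i P_i(H)=1$. The Frattini argument then makes any weak solution proper, as you say.

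\textbf{Comparison with the paper.} The paper first enlarges $H$ to $H\times M$ to equalize generator ranks and takes an arbitrary lift $\alpha'$ of $\alpha|_\Gamma$. It then uses Wingberg's lemma to produce a second action $\iota_2$ on $\Gamma$ for which $\alpha'$ is equivariant. Finally, Schur--Zassenhaus inside $\Aut(\Gamma)$ gives $g$, trivial on $\Gamma^{p,el}$, with $g\iota_1(s)g^{-1}=\iota_2(s)$, and the solution is $(\gamma,s)\mapsto(\alpha'(g(\gamma)),\rho_0(s))$.

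Your argument replaces the rank-matching trick, Wingberg's lemma and the conjugacy argument in $\Aut(\Gamma)$ by one textbook criterion. It also applies verbatim to any profinite group with $\mathrm{cd}_p\le 1$ in place of $\Gamma\rtimes\Delta$.

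What the paper's route gives directly is the normalization $\psi|_\Delta=\rho_0$. In yours, $\psi(\Delta)$ is a priori only a complement to $\Phi(H)$ in $\pi^{-1}(\Delta')=\Phi(H)\rtimes\Delta'$. Schur--Zassenhaus conjugates it onto $\Delta'$ by some $h\in\ker\pi$. Replacing $\psi$ by $h\psi h^{-1}$ recovers the form $\psi(\gamma\delta)=\varphi(\gamma)\rho_0(\delta)$ of your first paragraph. This is what justifies your ``iff'', and it is the normalization used in Theorem~\ref{theorem_4.3_bis} when writing $\rho(s)=A$.
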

 Since we start with a free pro-$p$ group $\Gamma$, the lifting problem is simpler than the one considered in \cite{Maire}. In particular, this theorem is proved in Greenberg \cite[Proposition 2.3.1]{Greenberg}. It also appears in \cite[Proposition 2.14]{HM}, where it is deduced from the unpublished work of Wingberg \cite{Wingberg}. Since this result is crucial for our purposes, we choose to give the main arguments of the proof, following \cite{HM} and \cite{Wingberg}.

\begin{proof}
The result is immediate if $\Delta=1$. For the general case, by assumption, there exists an $\F_p[\Delta]$-module isomorphism $H^{p,el} \oplus M \cong \Gamma^{p,el}$ for some $M$. Hence, replacing $H\rtimes\Delta'$ by $(H\times M)\rtimes\Delta'$ and modifying the chosen homomorphism $\alpha$ accordingly, if necessary, we may assume that $\Gamma$ and $H$ have the same generator rank. Let $\iota_1 : \Delta \to \Aut(\Gamma)$ denote the homomorphism corresponding to the original action of $\Delta$ on $\Gamma$, and henceforth write the semidirect product $\Gamma\rtimes\Delta$ appearing in the statement as $\Gamma\rtimes_{\iota_1}\Delta$.

Let $\alpha' : \Gamma \to H$ be a lift of the restriction $\alpha \mid_{\Gamma}$ of $\alpha$ to $\Gamma$. By \cite{Wingberg} or \cite[Lemma 2.15]{HM}, $\Gamma$ admits a $\Delta$-action defined by a homomorphism $\iota_2 : \Delta \to \Aut(\Gamma)$ for which $\alpha'$ is $\Delta$-equivariant. Let $\Gamma \rtimes_{\iota_2} \Delta$ be the semidirect product associated with $\iota_2$, and let $\beta : \Gamma \rtimes_{\iota_2} \Delta \to H \rtimes \Delta'$ be the surjection induced by $\alpha'$ and $\rho_0$.

The actions defined by $\iota_1$ and $\iota_2$ induce the same action on $\Gamma^{p,el}$. Thus, by the Schur-Zassenhaus theorem, there exists $g \in \Ker(\Aut(\Gamma) \to \Aut(\Gamma^{p,el}))$ such that $g \circ \iota_1(s)=\iota_2(s) \circ g$ for every $s \in \Delta$ (cf. \cite[Proposition 2.13]{HM}). It is straightforward to check that the composite of $\beta$ with the homomorphism
\begin{equation*}
    \Gamma \rtimes_{\iota_1} \Delta \rightarrow \Gamma \rtimes_{\iota_2} \Delta \qquad (\gamma, s) \to (g(\gamma), s)
\end{equation*}
is a solution to the embedding problem.
\end{proof}

We now apply the above lifting theorem in the setting of imaginary quadratic fields.


 \subsection{The lifting theorem and imaginary quadratic fields} \label{section_imaginary}
  Take $p\geq 3$.
  We begin with an imaginary quadratic extension $K/\Q$. 
  Put $\Delta=Gal(K/\Q)=\langle s \rangle$. Let $\1$ denote the trivial $\F_p$-character of $\Delta$, and let $\varphi$ be its nontrivial character. 
  Let~$K_p$ be the maximal pro-$p$ extension of $K$ unramified outside $p$, and set $G_{K,p}=Gal(K_p/K)$.
Observe that $\Delta$ acts on~$G_{K,p}$ and hence on~$G_{K,p}^{p,el}$. We denote by $\chi(G_{K,p}^{p,el})$ the character of the $\F\left[\Delta\right]$-module $G_{K,p}^{p,el}$.

The following result is very well-known (see for example \cite[Example 1.10]{Maire}; the case $p=3$ is also obvious).

\begin{prop} \label{prop_imaginaryquadratic} Let $K$ be an imaginary quadratic field  such that $p\nmid h_K$. The  group $G_{K,p}$ is free pro-$p$ on two generators and $\chi(G_{K,p}^{p,el})=\1+ \varphi$.
\end{prop}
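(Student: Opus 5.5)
The proof splits into two parts: freeness of $G_{K,p}$, and the computation of the character of $G_{K,p}^{p,el}$.

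\emph{Freeness.} A pro-$p$ group is free exactly when its cohomological dimension is at most $1$, equivalently when $H^2(G_{K,p},\Q_p/\Z_p)$ vanishes. For $p$ odd there are no real places, so the relevant invariant is
\[
\chi_2 = \dim H^2(G_{K,p},\F_p) - \dim H^1(G_{K,p},\F_p)
\]
(the negative of the Euler characteristic of $G_{K,p}$). By the global Euler--Poincaré characteristic formula (Tate), $\chi_2 = -r_2(K) = -1$. So it suffices to prove $\dim_{\F_p} H^1(G_{K,p},\F_p)=2$ and $H^2(G_{K,p},\F_p)=0$.

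\emph{The generator rank.} We have $\dim H^1 = \dim G_{K,p}^{p,el}$, and $G_{K,p}^{p,el}$ is the Galois group of the maximal elementary abelian $p$-extension of $K$ unramified outside $p$. Class field theory gives the exact sequence
\[
\Big(\prod_{v\mid p} U_v^1\Big)\big/\overline{\iota(E_K)} \longrightarrow G_{K,p}^{ab} \longrightarrow Cl_K[p^\infty] \longrightarrow 0 .
\]
Since $p\nmid h_K$, the class group term vanishes. The unit group $E_K$ is finite, and it has no $p$-torsion apart from the case $p=3$, $K=\Q(\sqrt{-3})$; in that case $h_K=1$ and the image of $\mu_3$ is finite and handled separately. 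Hence, up to a finite torsion part $\mathcal T$, the group $G_{K,p}^{ab}$ is $\prod_{v\mid p}U_v^1$, whose $\Z_p$-rank is $\sum_{v\mid p}[K_v:\Q_p]=2$.

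Freeness with exactly two generators follows if the torsion $\mathcal T$ of $G_{K,p}^{ab}$ is trivial. Indeed, $H^2(G_{K,p},\F_p)$ then sits inside the Leopoldt defect plus the torsion contribution, and Leopoldt holds trivially for $K$ since its unit rank is $0$. In fact $\mathcal T \simeq \mu_p(\bigoplus_{v\mid p}K_v)/\mu_p(K)$ modulo class group contributions, so the torsion-freeness amounts to checking that the torsion of $\prod_{v\mid p}U_v^1/\overline{\iota(E_K)}$ vanishes. This requires $\mu_p\not\subset K_v$ for $v\mid p$, or, if $\mu_p\subset K_v$, that $\mu_p\subset K$ diagonally kills it.

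\emph{Main obstacle.} The case analysis for $p=3$ is the delicate step. For $p\geq 5$, each $K_v$ has degree at most $2$ over $\Q_p$, so $\mu_p\not\subset K_v$ and $\mathcal T=0$. For $p=3$, one has $\mu_3\subset K_v$ only when $K_v=\Q_3(\sqrt{-3})$, i.e.\ when $3$ ramifies in $K$ with $K_v\simeq\Q_3(\sqrt{-3})$. If $K=\Q(\sqrt{-3})$, the global $\mu_3$ kills this local torsion. Otherwise one must use the alternative formula $\dim H^2 = \dim \text{\cyr{Sh}}^2 + \dots$ together with the Gras--Jaulent triviality of the torsion when $p\nmid h_K$, or a direct reflection (Scholz) argument relating $\mathcal T$ to the $3$-class group of $K(\sqrt{-3})$. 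This is the case the paper calls ``obvious''.

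\emph{The character.} The $\F_p[\Delta]$-module $G_{K,p}^{p,el}$ is $(\prod_{v\mid p}U_v^1)\otimes\F_p$. Since $\prod_{v\mid p}U_v^1 \otimes\Q_p \simeq K\otimes\Q_p$ via the logarithm, and $K\otimes\Q_p$ is the regular representation of $\Delta$ (normal basis theorem), the character of this module is $\1+\varphi$. The order of $\Delta$ is $2$, coprime to $p$, so the character reduces mod $p$ without change. Equivalently, the invariant part corresponds to the cyclotomic $\Z_p$-extension, giving $\1$, and the anti-invariant part to the anticyclotomic $\Z_p$-extension, giving $\varphi$.
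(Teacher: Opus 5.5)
\emph{Verdict: the proposition as stated is false in one $p=3$ subcase, so this part of your proof cannot be completed.} The gap is exactly the case you flag as the main obstacle: $p=3$, $K_v\cong\Q_3(\sqrt{-3})$, $K\neq\Q(\sqrt{-3})$.

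Your own formula already shows the failure. When $p\nmid h_K$ there is no class-group contribution, so class field theory gives exactly $G_{K,p}^{ab}\cong \prod_{v\mid p}U_v^1/\iota(\mu_{p^\infty}(K))$. Hence $\mathcal T\cong\prod_{v\mid p}\mu_{p^\infty}(K_v)/\iota(\mu_{p^\infty}(K))$.

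Take $p=3$ and $K=\Q(\sqrt{-21})$:
\begin{itemize}
\item $h_K=4$, and $3$ ramifies in $K$.
\item $K_v=\Q_3(\sqrt{-3}\cdot\sqrt{7})=\Q_3(\sqrt{-3})$, since $7\equiv 1\pmod 3$ is a square in $\Q_3$; but $\mu_3\not\subset K$.
\item Hence $G_{K,3}^{ab}\cong U_v^1\cong \Z/3\Z\times\Z_3^2$ and $\dim_{\F_3}H^1(G_{K,3},\F_3)=3$. Shafarevich's formula for the generator rank gives the same, since $\mu_3\subset K_v$ but $\mu_3\not\subset K$.
\item By the Euler characteristic, $\dim_{\F_3}H^2(G_{K,3},\F_3)=1$, so $G_{K,3}$ is not free.
\end{itemize}
The same happens for every $K=\Q(\sqrt{-d})$ with $d\equiv 3\pmod 9$, $d\neq 3$ and $3\nmid h_K$ (e.g. $d=30,39$). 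No appeal to Gras--Jaulent or to Scholz reflection can make $\mathcal T$ vanish there: these fields are simply not $3$-rational.

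So the statement needs an extra hypothesis when $p=3$. For example, require $K$ to be $3$-rational; for $K=\Q(\sqrt{-d})$ with $3\nmid h_K$ this means $d\not\equiv 3\pmod 9$ or $d=3$. The paper's comment that ``the case $p=3$ is also obvious'' glosses over this. Its main theorems are unaffected, because they only use $K=\Q(\sqrt{-p})$, i.e. $\Q(\sqrt{-3})$ when $p=3$, and you handle that case correctly. With the extra hypothesis, the rest of your argument is sound, including the character computation via $\otimes\Q_p$ and the cyclotomic/anticyclotomic splitting.

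Two smaller corrections:
\begin{itemize}
\item A pro-$p$ group is free if and only if $H^2(G,\F_p)=0$, not $H^2(G,\Q_p/\Z_p)=0$. The latter is just Leopoldt, automatic here; the former is Leopoldt plus $\mathcal T=0$, which is what your argument actually uses.
\item For $K=\Q(\sqrt{-3})$, the module $G_{K,p}^{p,el}$ is not $(\prod_{v\mid p}U_v^1)\otimes\F_p$, which is $3$-dimensional. The character should be read off the torsion-free lattice $G_{K,p}^{ab}\cong U_v^1/\mu_3$, which your $\otimes\Q_p$ argument effectively does.
\end{itemize}
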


\begin{exem} \label{example} Set $K=\Q(\sqrt{-p})$.
  Thanks to an explicit version of Brauer-Siegel Theorem (see for example \cite{Louboutin}), $p\nmid h_K$,  and therefore $G_{K,p}$ satisfies Proposition \ref{prop_imaginaryquadratic}.
\end{exem}

For our purposes, let us work in the setting of Proposition \ref{prop_imaginaryquadratic}.
  Let  $G$ be a pro-$p$ group having an automorphism  $t$ of order $2$. Set $\Delta'=\langle t \rangle$, and consider the isomorphism $\rho_0: \Delta \stackrel{\sim}{\rightarrow} \Delta'$.
   Suppose that $g_1, g_2 \in G$ satisfy $t(g_1)=g_1$ and $t(g_2)=g_2^{-1}$. Let $H:=\langle g_1, g_2 \rangle$ be the closed subgroup of $G$ generated by $g_1$ and~$g_2$. Then $\Delta' $ is a subgroup  of $ \Aut(H)$, and  $\Delta$ acts on $H$  via $\rho_0$. The character $\chi(H^{p,el})$ of the $\F_p\left[\Delta\right]$-module $H^{p,el}$ is given  by $\chi(H^{p,el}) = \1+\varphi$.
As consequence of  Theorem \ref{theorem_lifting} and Proposition \ref{prop_imaginaryquadratic} applied to $\Gamma=G_{K,p}$, we obtain the following surjective morphism $$\psi:Gal(K_p/\Q) \simeq G_{K,p} \rtimes \Delta \twoheadrightarrow H \rtimes \Delta'.$$
  Let $F$ be the subfield of $K_p$ fixed by the kernel of $\psi$. Then $F/\Q$ is a Galois extension such that $Gal(F/\Q) \simeq  H \rtimes Gal(K/\Q)$. Thus, we have proved the following.

 \begin{theo} \label{theorem_4.3} 
 Let $p\geq 3$, and let $K/\Q$ be an imaginary quadratic extension with
   $p\nmid h_K$. Let $G$ be a pro-$p$ group equipped with an automorphism $t$ of order $2$. Assume that there exist $g_1, g_2 \in G$ such that  $t(g_1)=g_1$ and $t(g_2)=g_2^{-1}$, and set $H=\langle g_1, g_2\rangle$. 
  Then there exist a tower of  Galois extensions $K_p/F/K/\Q$ such that $Gal(F/\Q) \simeq  H \rtimes Gal(K/\Q)$. 
  \end{theo}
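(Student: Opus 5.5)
The plan is to assemble Theorem \ref{theorem_4.3} from Theorem \ref{theorem_lifting} and Proposition \ref{prop_imaginaryquadratic}, following the discussion preceding the statement. First, take $\Gamma=G_{K,p}$ and $\Delta=Gal(K/\Q)=\langle s\rangle$. Since $|\Delta|=2$ is prime to $p$, Schur--Zassenhaus gives a splitting $Gal(K_p/\Q)\simeq G_{K,p}\rtimes\Delta$, with $\Delta$ acting faithfully on $G_{K,p}$ by conjugation. By Proposition \ref{prop_imaginaryquadratic}, $\Gamma$ is free pro-$p$ on two generators and $\chi(\Gamma^{p,el})=\1+\varphi$. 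Faithfulness holds because $\Delta$ acts nontrivially on $\Gamma^{p,el}$.

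Next, I will check the hypotheses on $H=\langle g_1,g_2\rangle$. Because $t(g_1)=g_1$ and $t(g_2)=g_2^{-1}$, the closed subgroup $H$ is $t$-stable, so $\Delta'=\langle t\rangle$ acts on $H$ and hence on $H$ through $\rho_0$. The quotient $H^{p,el}$ is generated by the images $\bar g_1,\bar g_2$, on which $t$ acts by $+1$ and $-1$ respectively. So $H^{p,el}$ is a quotient of $\1\oplus\varphi$.

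The one point that needs some care is the following: if, for instance, $\bar g_2=0$ in $H^{p,el}$, the character could be a proper subcharacter of $\1+\varphi$. This causes no difficulty, because Theorem \ref{theorem_lifting} only requires $H^{p,el}$ to be isomorphic to a sub-$\Delta$-module of $\Gamma^{p,el}\simeq\1\oplus\varphi$. Any quotient of $\1\oplus\varphi$ is a direct sum of a subset of these two distinct characters, so it is such a submodule. It follows that there is a surjective $\Delta$-homomorphism $\Gamma^{p,el}\twoheadrightarrow H^{p,el}$: project onto the matching isotypic components and send the generators to $\bar g_1,\bar g_2$. Together with $\rho_0$ this gives a surjection $\alpha$.

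Then Theorem \ref{theorem_lifting} yields a surjection $\psi: G_{K,p}\rtimes\Delta\twoheadrightarrow H\rtimes\Delta'$ lifting $\alpha$. Since $\alpha$ maps $\Delta$ isomorphically onto $\Delta'$, $\psi$ restricts to a surjection $G_{K,p}\twoheadrightarrow H$ and is injective on $\Delta$. Let $F\subset K_p$ be the fixed field of $\ker\psi$. As $\ker\psi$ is normal in $Gal(K_p/\Q)$, $F/\Q$ is Galois with $Gal(F/\Q)\simeq H\rtimes\Delta'\simeq H\rtimes Gal(K/\Q)$. Finally, $\ker\psi\subset G_{K,p}=Gal(K_p/K)$, because $\psi$ composed with the projection to $\Delta'$ is the projection to $\Delta$. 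Hence $K\subset F$, which gives the tower $K_p/F/K/\Q$.
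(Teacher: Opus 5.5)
Your proposal is correct and follows the paper's argument: you apply Theorem~\ref{theorem_lifting} with $\Gamma=G_{K,p}$, using Proposition~\ref{prop_imaginaryquadratic}, and take $F$ to be the fixed field of $\ker\psi$. You add one refinement the paper omits. You only require $H^{p,el}$ to be a quotient of $\1\oplus\varphi$, hence a submodule of $\Gamma^{p,el}$, rather than asserting $\chi(H^{p,el})=\1+\varphi$; this covers the degenerate cases $g_1=1$ or $g_2=1$, where the paper's equality fails but the lifting theorem still applies.
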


\subsection{$p$-adic analytic pro-$p$ group}

In this section, we revisit the key ingredients from \cite{Maire} concerning $p$-adic analytic groups. We shall apply Theorem \ref{theorem_lifting} with a $p$-adic analytic pro-$p$ group $H$, exploiting the correspondence between $\Z_p$-Lie algebras and $p$-adic analytic groups. We refer the reader to \cite{DDMS} for further details.

\smallskip

Take $p\geq 3$ and $r\geq 2$. Denote by $\Gl_r(\Z_p)$ the general linear group over $\Z_p$, by $\Sl_r(\Z_p)$ its subgroup of matrices of determinant $1$. Let $O_r(\Z_p)$ be the subgroup of $\Gl_r(\Z_p)$ consisting of matrices
$M$ satisfying $M^tM=I$, and set $\SO_r(\Z_p):=O_r(\Z_p)\cap \Sl_r(\Z_p)$.

For each integer $k\geq 1$, let $\varphi_k : \Gl_r(\Z_p) \rightarrow \Gl_r(\Z/p^k\Z)$ denote the natural reduction map. We then define the principal congruence subgroups by
\begin{equation*}
    \Gl_r^k(\Z_p):=\ker(\varphi_k), \quad \Sl_r^k(\Z_p):= \ker(\varphi_k) \cap \Sl_r(\Z_p), \quad \SO_r^k(\Z_p):= \ker(\varphi_k) \cap \SO_r(\Z_p).
\end{equation*}

\subsubsection{The $\Z_p$-Lie algebra $\gl_r$}

Let $\gl_r(\Z_p)$ denote the free $\Z_p$-module of rank~$r^2$ with basis $\{E_{i,j}\}_{1 \leq i,j \leq r}$, where $E_{i,j}$ is the $(i,j)$th elementary matrix.
Then $\gl_r$ becomes a $\Z_p$-Lie algebra with Lie bracket $[A\ B]=AB-BA$. Set $\gl_r(\Q_p):=\Q_p\otimes_{\Z_p} \gl_r$, the $\Q_p$-Lie algebra of all $r\times r$ matrices over $\Q_p$ with the same Lie bracket. For $k\geq 1$, let $\gl_r^k:=p^k\gl(\Z_p)$.
Since $[\gl_r^k \,\, \gl_r^k] \subset p \gl_r^k$, $\gl_r^k$ is a  {\it powerful} $\Z_p$-Lie algebra (see \cite[Chapter 9, \S 9.4]{DDMS}).

\medskip

The exponential and logarithmic maps $\exp:\gl_r^1\to\Gl_r^1(\Z_p)$ and
$\log:\Gl_r^1(\Z_p)\to\gl_r^1$ are given by $\exp(x)=\sum_{n\ge0}\frac{x^n}{n!}$ and $\log(z)=\sum_{n\ge1}\frac{(-1)^{n+1}}{n}(z-1)^n$. Recall that $\exp$ and $\log$ are mutually inverse and induce an order-preserving bijection
\begin{equation*}
    L \longmapsto \exp(L), \qquad G \longmapsto \log(G)
\end{equation*}
between powerful $\Z_p$-Lie subalgebras $L$ of $\gl_r^1$ and uniform subgroups $G$ of $\Gl_r^1(\Z_p)$ (\cite[Theorem 9.10]{DDMS}). It is well-known that $\exp(\gl_r^k)=\Gl_r^{k}(\Z_p)$ and that $\Gl_r^k(\Z_p)$ is {\it uniform} for each $k$ (cf. \cite[Theorem 5.2, \S 4.1]{DDMS}).

\medskip

Let $G \subset \Gl_r^1(\Z_p)$ be a $p$-adic analytic group. Then $G$ contains an open uniform subgroup~$G_0$. Let $\Gg_0:=\log(G_0)$. Then $\Gg_0$ is a powerful $\Z_p$-Lie subalgebra of $\gl_r^1$. 
The $\Q_p$-Lie algebra associated with $G$ is defined as $\Gg_0(\Q_p):=\Q_p \otimes_{\Z_p} \Gg_0$. This $\Q_p$-Lie algebra is independent of the choice of $G_0$. Thus, we may unambiguously write $\Gg(\Q_p):=\Gg_0(\Q_p)$.

\medskip

\subsubsection{The $\Z_p$-Lie algebras $\sl_{r}$ and $\so_{r}$ }
Let $\sl_r(\Z_p)$ denote the $\Z_p$-Lie subalgebra of $\gl_r(\Z_p)$ consisting of trace-zero matrices. For each integer $k\geq 1$, set $\sl_r^k:=p^k\sl_r(\Z_p)=\gl_r^k\cap \sl_r(\Z_p)$. It follows from $\exp(\sl^1_r(\Z_p))=\Sl_r(\Z_p)$ that $\exp(\sl_r^k)=\Sl_r^k(\Z_p)$. Since $\sl_r^k$ is powerful, $\Sl_r^k(\Z_p)$ is uniform pro-$p$ of dimension $r^2-1$. This equals the $\Q_p$-dimension of the $\Q_p$-Lie algebra $\sl_r(\Q_p)=\Q_p\otimes_{\Z_p}\sl_r(\Z_p)$.

\medskip

Next, let $\so_r(\Z_p)$ denote the $\Z_p$-Lie subalgebra of $\gl_r(\Z_p)$ consisting of matrices $M$ satisfying $M+M^t=0$. For each integer $k\geq 1$, set $\so_r^k:= p^k\so_r(\Z_p)$. The  algebra $\so_r^k$ is powerful and consequently its exponential $\SO_r^k(\Z_p)$ is a uniform pro-$p$ group. Its dimension is $\frac{r(r-1)}{2}$, equal to the $\Q_p$-dimension of the $\Q_p$-Lie algebra $\so_r(\Q_p)=\Q_p \otimes_{\Z_p} \so_r(\Z_p)$.

We shall write $\sl_r$ (resp. $\so_r$) for $\sl_r(\Z_p)$ (resp. $\so_r(\Z_p)$) whenever no confusion can arise.

\subsubsection{Kuranishi pair} \label{section_simplealgebra}

The $\Q_p$-Lie algebras $\sl_r(\Q_p)$ and $\so_r(\Q_p)$ are semisimple. The following result is essential to our strategy.

\begin{theo}[Kuranishi \cite{Kuranishi}] \label{theo_kuranishi}
 A semisimple $\Q_p$-Lie algebra $\Ll$ can be  generated by $2$ elements.
\end{theo}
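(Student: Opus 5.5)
Here is how I would prove Kuranishi's theorem: a semisimple $\Q_p$-Lie algebra $\Ll$ is generated by two elements. The idea is to choose a regular semisimple element $x$ that is generic enough, and then a second element $y$ whose components along all root spaces are nonzero. Since $\Q_p$ is not algebraically closed, I will work over $\overline{\Q}_p$ and then descend. The descent rests on one observation. Write $\langle x,y\rangle$ for the subalgebra generated by $x$ and $y$. Over $\Q_p$ it is spanned by the iterated brackets of $x$ and $y$, and forming these brackets commutes with extension of scalars. So $\langle x,y\rangle = \Ll$ holds if and only if $\langle x,y\rangle_{\overline{\Q}_p}=\Ll\otimes\overline{\Q}_p$.

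Moreover, generation is a Zariski-open condition on the pair $(x,y)\in \Ll\times\Ll$. Let $N=\dim \Ll$. Consider the brackets of length at most $N$. The pair $(x,y)$ generates $\Ll$ exactly when these brackets span $\Ll$, and that happens exactly when some $N\times N$ minor of the matrix formed by these brackets is nonzero. Each such minor is a polynomial in the coordinates of $x$ and $y$. Since $\Q_p$ is infinite, $\Ll(\Q_p)$ is Zariski-dense in $\Ll\otimes\overline{\Q}_p$. Hence it suffices to show the open set is nonempty over $\overline{\Q}_p$, i.e.\ to prove the theorem for split semisimple Lie algebras over an algebraically closed field of characteristic $0$.

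Over $\overline{\Q}_p$, fix a Cartan subalgebra $\mathfrak h$ and the root decomposition $\Ll=\mathfrak h\oplus\bigoplus_{\alpha\in\Phi}\Ll_\alpha$. I will take the following pair:
\begin{itemize}
\item $x\in\mathfrak h$ with $\alpha(x)\neq\beta(x)$ for all $\alpha\neq\beta$ in $\Phi\cup\{0\}$. This avoids finitely many hyperplanes, so such $x$ exists.
\item $y=\sum_{\alpha\in\Phi} e_\alpha$, where each $e_\alpha$ spans $\Ll_\alpha$.
\end{itemize}
Then $\mathrm{ad}(x)^k y=\sum_\alpha \alpha(x)^k e_\alpha$. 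The values $\alpha(x)$ are distinct and nonzero, so a Vandermonde argument shows that the span of $\mathrm{ad}(x)^k y$ for $k=1,\dots,|\Phi|$ contains every $e_\alpha$. Hence $\langle x,y\rangle$ contains all root spaces. It therefore contains $[e_\alpha,e_{-\alpha}]$, and these elements span $\mathfrak h$, because the coroots $h_\alpha$ span $\mathfrak h$ for semisimple $\Ll$. So $\langle x,y\rangle=\Ll$.

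The key step is the Vandermonde step. It needs the separation condition on the $\alpha(x)$, including $\alpha(x)\neq 0$, which is exactly regularity. Once that is secured, everything else is structural bookkeeping. The point needing care is the descent: I must ensure the polynomial minors are not identically zero, which the explicit pair $(x,y)$ over $\overline{\Q}_p$ guarantees. A nonzero polynomial cannot vanish on all of $\Q_p^{2N}$, since $\Q_p$ is infinite. This produces the required pair in $\Ll(\Q_p)$.
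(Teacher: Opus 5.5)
Your proof is correct. It is the classical argument for Kuranishi's theorem: take $x\in\mathfrak h$ separating $\Phi\cup\{0\}$ and $y=\sum_\alpha e_\alpha$, use a Vandermonde determinant to extract each $e_\alpha$ from the $\mathrm{ad}(x)^k y$, then use $[e_\alpha,e_{-\alpha}]$ to fill $\mathfrak h$. You combine this with a sound descent from $\overline{\Q}_p$ to $\Q_p$ via Zariski density of $\Q_p$-points.

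One step deserves its one-line justification: the claim that brackets of length at most $N=\dim\Ll$ already span $\langle x,y\rangle$. Let $V_k$ be the span of right-normed brackets of length at most $k$. Then $V_{k+1}=\Q_p x+\Q_p y+[x,V_k]+[y,V_k]$, so once the chain stops growing it is stationary, which happens by $k=N$. Right-normed brackets span the generated subalgebra by the Jacobi identity.

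The paper gives no proof of this statement; it simply quotes Kuranishi and invokes the result in Corollary~\ref{coro_locallyequal}. However, Section~\ref{section_Lie_algebra} reproves a refined, integral version for $\sl_r$ and $\so_r$ by exactly your mechanism, adapted to the involution $x\mapsto dxd^{-1}$:
\begin{itemize}
\item $\alpha\in\Gg^+$ is a Cartan-type element whose $\mathrm{ad}$-eigenvalues on $\Gg^-$ are pairwise distinct.
\item $\beta\in\Gg^-$ is the sum of the corresponding eigenvectors. These are only the root vectors lying in $\Gg^-$; your $y=\sum_{\alpha\in\Phi}e_\alpha$ would mix $\Gg^+$ and $\Gg^-$.
\item The Vandermonde lemma recovers $\Gg^-$ up to an explicit power of $p$.
\item The identity $[\Gg^-,\Gg^-]=\Gg^+$ replaces your step $[e_\alpha,e_{-\alpha}]\in\mathfrak h$.
\end{itemize}

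Your route buys generality at no cost: it works for any semisimple $\Ll$, split or not, with no explicit choices. What it cannot give is quantitative $p$-adic information. The density argument is purely existential and says nothing about the index of the $\Z_p$-Lie algebra generated inside a lattice.

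The paper replaces ``distinct eigenvalues'' by ``distinct modulo $p^k$'' and replaces the nonvanishing of the Vandermonde determinant by a valuation bound. This is exactly what produces the explicit congruence subgroups $\Sl_r^{4kmn-2k+2}(\Z_p)$ and $\SO_r^{2kmn-2k+2}(\Z_p)$ in the main theorems. The eigenspace placement of the generators is what realizes $i(\rho)=a$.
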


As a corollary of Theorem \ref{theo_kuranishi} we get

\begin{coro} \label{coro_locallyequal}
 Let $G\subset \Gl_r^1(\Z_p)$ be a $p$-adic analytic group such that $\Gg(\Q_p)$ is semisimple. Then there  exist two elements
 $g_1$ and $g_2$ in $G$  such that $G$ and the (closed) subgroup~$H$ generated by  $g_1$ and $g_2$ are commensurable.
\end{coro}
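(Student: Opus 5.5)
We will pass to uniform subgroups and the Lie correspondence, use Kuranishi's theorem to pick two generators of the Lie algebra, and then exponentiate. Since $G$ is $p$-adic analytic, we fix an open uniform subgroup $G_0\subset G$ and set $\Gg_0=\log(G_0)$. This is a powerful $\Z_p$-Lie lattice with $\Q_p\otimes\Gg_0=\Gg(\Q_p)=:\Ll$, which is semisimple by hypothesis. By Theorem \ref{theo_kuranishi} there exist $x,y\in\Ll$ generating $\Ll$ as a $\Q_p$-Lie algebra. After multiplying by a suitable power of $p$ (which does not change the $\Q_p$-Lie algebra they generate), we may assume $x,y\in p\,\Gg_0$, or deeper in $p^j\Gg_0$, so that they lie in the domain where $\exp$ lands in $G_0$. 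We then set $g_1=\exp(x)$, $g_2=\exp(y)$, both in $G_0\subset G$, and let $H=\overline{\langle g_1,g_2\rangle}$.

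Next we identify the Lie algebra of $H$. As a closed subgroup of $G$, $H$ is $p$-adic analytic, so it has an open uniform subgroup $H_0$ and a $\Q_p$-Lie algebra $\mathfrak h(\Q_p)=\Q_p\otimes\log(H_0)$, which is a subalgebra of $\Ll$. The key claim is $x,y\in\mathfrak h(\Q_p)$. Some power $g_1^{p^e}$ lies in $H_0$, so $\log(g_1^{p^e})=p^e x\in\log(H_0)$, hence $x\in\mathfrak h(\Q_p)$; the same argument gives $y\in\mathfrak h(\Q_p)$. Since $\mathfrak h(\Q_p)$ is a Lie subalgebra containing the generators $x,y$, we get $\mathfrak h(\Q_p)=\Ll$. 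To justify that $\log(H_0)$ is a $\Z_p$-Lie subalgebra of $\gl_r^1$, compatible with $\log$ on $G_0$ and closed under bracket, we use the correspondence recalled above (\cite[Theorem 9.10]{DDMS}).

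Finally we deduce commensurability. We have $H_0\subset G$, and $\log(H_0)$ and $\Gg_0$ are both $\Z_p$-lattices spanning $\Ll$, hence of the same rank $\dim\Ll$. Consequently $H_0\cap G_0$ has the same dimension as $G_0$. A closed subgroup of a uniform group $G_0$ with the same dimension is open in $G_0$, for instance because its index is finite by a dimension/Hilbert–Samuel count, or via \cite[Chapter 9]{DDMS}. Thus $H\cap G_0$ is open in $G_0$, hence in $G$, and it is open in $H$ because it contains $H_0\cap G_0$, which is open in $H_0$. So $H$ and $G$ share a common open subgroup.

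The main obstacle is the step from ``equal $\Q_p$-Lie algebras'' to ``commensurable'', i.e.\ showing that a closed subgroup with full Lie algebra is open. We will handle it by invoking the standard fact that for $p$-adic analytic groups, $\dim H=\dim G$ for closed $H\subset G$ implies that $H$ is open (\cite{DDMS}, e.g.\ Theorem 9.6/Exercise on dimension).
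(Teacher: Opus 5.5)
Your proposal is correct and follows the paper's proof essentially step by step: pass to an open uniform $G_0$, scale Kuranishi generators of $\Ll$ into $\Gg_0$, exponentiate, and use $p^e x,\,p^e y\in\log(H_0)$ to get $\Q_p\otimes\log(H_0)=\Ll$. The paper leaves the final passage from equal $\Q_p$-Lie algebras to commensurability implicit, and you can close it without the general dimension theorem: since $H\subseteq G_0$, the lattice $\log(H_0)\subseteq\Gg_0$ has full rank, so $p^N\Gg_0\subseteq\log(H_0)$ for some $N$, and the order-preserving correspondence gives $H_0\supseteq\exp(p^N\Gg_0)=G_0^{p^N}$, which is open in $G_0$.
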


The proof is straightforward, but since this observation is essential, we include it.

\begin{proof} Let $G_0$ be an open uniform subgroup of $G$.
Let $\Gg_0:=\log(G_0)$ be the powerful $\Z_p$-Lie subalgebra of $\gl_r^1$ associated to $G_0$, and set $\Ll:=\Gg_0(\Q_p)=\Q_p\otimes_{\Z_p} \Gg_0$. 
By Theorem \ref{theo_kuranishi}, there exist $z_1,z_2 \in \Ll$ such that $\Ll=\langle z_1,z_2 \rangle$. 
After multiplying $z_1$ and $z_2$ by some powers of~$p$, we can assume that $z_1$ and $z_2$ also belong to $\Gg_0$.
Set $g_1=\exp(z_1)$ and $g_2=\exp(z_2)$, and let  $H=\langle g_1,g_2\rangle$ be the closed subgroup of $G_0$ generated by $g_1$ and $g_2$. Since $H$ is a closed subgroup of a $p$-adic analytic group $G_0$, it is itself $p$-adic analytic. Let $H_0$ be an open uniform subgroup of $H$. Then for $n\gg 0$, $g_1^{p^n}$ and $g_2^{p^n}$ are in $H_0$. Hence the $\Z_p$-Lie algebra $\Ll_{H_0}:=\log(H_0)$  contains both $p^nz_1$ and $p^nz_2$, and then $\Q_p\otimes_{\Z_p} \Ll_{H_0}=\Ll$. Thus, $H$ and $G$ are commensurable. \end{proof}

For what follows, we use the following terminology. Let $G$ (resp. $\Gg$) be a $p$-adic analytic group (resp. a $\Z_p$-Lie algebra).  A pair $(g_1, g_2)$ of elements of $G$ (resp. $(z_1,z_2)$ of $\Gg$)  is called a \emph{Kuranishi pair} of $G$ (resp. of $\Gg$)  if the closed subgroup generated by $g_1$ and $g_2$ (resp. the $\Z_p$-Lie subalgebra generated by $z_1$ and $z_2$) is open in $G$ (resp. of finite index in $\Gg$).


   \subsection{A key principle} 
   \label{section_liealgebra_uniform}

Now we explain the principle underlying our approach. We continue to work in the setting of the previous subsection. For a more general treatment, we refer the reader to \cite{Maire}.
Let $G$ be a $p$-adic analytic group whose associated $\Q_p$-Lie algebra $\Gg(\Q_p)$ is semisimple. Let $G_0$ be an open uniform subgroup of $G$, and let $(z_1, z_2)$ be a Kuranishi pair of $\Gg_0=\log(G_0)$. Set $g_i=\exp(z_i)$ for $i=1,2$ and let $H=\langle g_1, g_2 \rangle$. Then $H$ and $G$ are commensurable.

Assume moreover that there exists an element $A\in \Gl_r(\Z_p)$ of order $2$ satisfying $Az_1A^{-1}=z_1$ and $Az_2A^{-1}=-z_2$. By the equivariance of the exponential map with respect to conjugation, it follows that $A g_1A^{-1}=g_1$ and $Ag_2A^{-1}=g^{-1}_2$. 
Hence the conjugation map $t_A: g\mapsto AgA^{-1}$ defines an automorphism of $H$ of order $2$. 
Combining Theorem \ref{theorem_4.3} with the above discussion of Kuranishi pairs, we obtain the following.

 \begin{theo} \label{theorem_4.3_bis} Assume that $p\geq 3$ and $r\geq 2$. 
  Let $K/\Q$ be an imaginary quadratic field extension such that
   $p\nmid h_K$. Let $G\subset \Gl_r(\Q_p)$ be a  $p$-adic analytic  group. Suppose that there exists a Kuranishi pair $(g_1, g_2)$ of $G$ and a matrix $A \in \Gl_r(\Z_p)$ of order $2$ such that $A g_1A^{-1}=g_1$ and $A g_2 A^{-1}=g_2^{-1}$.
  Then there exists a continuous Galois representation $\rho: Gal(K_{p}/\Q)\rightarrow \Gl_r(\Q_p)$ 
  such that $\mathrm{im}(\rho)$ and $G$ are commensurable. Moreover, $i(\rho)=|{\rm tr}(A)|$.
  \end{theo}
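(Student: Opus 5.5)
We will deduce the statement from Theorem \ref{theorem_4.3}, applied to a suitable pro-$p$ group equipped with the involution given by conjugation by $A$, and then read off $\rho(c)$.

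\emph{Step 1: reduce to a pro-$p$ group.} Since $(g_1,g_2)$ is a Kuranishi pair, $H=\langle g_1,g_2\rangle$ is open in $G$, hence $p$-adic analytic. First we make $H$ pro-$p$. Replace $g_i$ by $g_i^{p^N}$ with $N$ large, so that $g_i\in \Gl_r^1(\Z_p)$, which is pro-$p$. This is possible because $g_i$ lies in a compact subgroup of $\Gl_r(\Q_p)$, so some power lands in a pro-$p$ open subgroup. The relations are preserved: $Ag_1^{p^N}A^{-1}=g_1^{p^N}$ and $Ag_2^{p^N}A^{-1}=g_2^{-p^N}$. The new pair is still a Kuranishi pair. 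Indeed, $\langle g_1^{p^N},g_2^{p^N}\rangle$ has $\Q_p$-Lie algebra containing $p^N\log g_i$, as in the proof of Corollary \ref{coro_locallyequal}. Hence it has the same $\Q_p$-Lie algebra as $H$ and is open in $H$, so it is commensurable with $G$.

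We may also conjugate inside $\Gl_r(\Z_p)$ if necessary: $A$ normalizes $\Gl_r^1(\Z_p)$, since the latter is normal in $\Gl_r(\Z_p)$. So $t_A$ is an automorphism of the pro-$p$ group $\Gl_r^1(\Z_p)$ that preserves $H$. Its order is $2$ as soon as $A\neq \pm I$. In the degenerate case $A=\pm I$, the relation $g_2=g_2^{-1}$ forces $g_2=1$; then $H$ is procyclic, and $\Gg(\Q_p)$ being the Lie algebra of a commensurable group, this case is handled trivially, or excluded by assumption. If $t_A$ restricts trivially to $H$, we can still use $H\times\langle A\rangle$ in what follows.

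\emph{Step 2: apply Theorem \ref{theorem_4.3}.} Apply it with the pro-$p$ group $H$ and the involution $t=t_A$. This gives $F\subset K_p$ with $Gal(F/\Q)\simeq H\rtimes \langle t\rangle$, where the generator $s$ of $Gal(K/\Q)$ corresponds to $t$. The semidirect product $H\rtimes\langle t\rangle$ maps to $\Gl_r(\Z_p)$ by $h\mapsto h$ and $t\mapsto A$. This is a homomorphism because conjugation by $A$ realizes $t$ and $A^2=I$.

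Composing $Gal(K_p/\Q)\twoheadrightarrow Gal(F/\Q)\simeq H\rtimes\Delta'$ with this map yields a continuous $\rho$. Its image is $H\langle A\rangle$, which contains $H$ with index at most $2$. Hence $\mathrm{im}(\rho)$ is commensurable with $H$, and therefore with $G$.

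\emph{Step 3: compute $i(\rho)$.} $K$ is imaginary quadratic, so every complex conjugation $c$ restricts nontrivially to $K$. Its image in $H\rtimes\Delta'$ is therefore $ht$ for some $h\in H$, and $\rho(c)=hA$ is an involution. Write $\mathcal G=H\rtimes\langle t\rangle$. Every element of order $2$ in $\mathcal G$ mapping onto $t$ is $H$-conjugate to $t$: complements of the normal pro-$p$ subgroup $H$ in $\mathcal G$ are conjugate by profinite Schur--Zassenhaus, since $2$ is prime to $p$. Hence $\rho(c)$ is conjugate to $A$, and ${\rm tr}\,\rho(c)={\rm tr}(A)$ for every complex conjugation $c$. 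This gives $i(\rho)=|{\rm tr}(A)|$.

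The main obstacle I expect is Step 1: making sure the modified $H$ is genuinely pro-$p$ and still carries an order-$2$ automorphism satisfying the hypotheses of Theorem \ref{theorem_4.3}. The conjugacy argument of Step 3 is the other point needing care, and Schur--Zassenhaus handles it.
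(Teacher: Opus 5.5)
Your Steps 2 and 3 are exactly the paper's proof. You apply Theorem \ref{theorem_4.3} to $H=\langle g_1,g_2\rangle$ with $t=t_A$, extend by $t\mapsto A$ to $H\rtimes\langle t\rangle\to\Gl_r(\Q_p)$, and use profinite Schur--Zassenhaus to see that $\rho(c)$ is conjugate to $A$. The paper can apply Theorem \ref{theorem_4.3} directly because it works in the setting of \S\ref{section_liealgebra_uniform}. There $g_i=\exp(z_i)$ with $z_i\in\log(G_0)\subseteq\gl_r^1$, so $H\subseteq\Gl_r^1(\Z_p)$ is already pro-$p$. In the proofs of Theorems \ref{theo-sl} and \ref{theo-so}, $g_1=\exp(\alpha)$ and $g_2=\exp(\beta)$ with $\alpha,\beta\in p\,\sl_r$ or $p\,\so_r$. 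The gap is your Step 1, which tries to reduce an arbitrary $G\subset\Gl_r(\Q_p)$ to that case. It is unnecessary when $H$ is pro-$p$, and invalid when it is not.

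Two things go wrong in Step 1.

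First, $g_i^{p^N}$ need not enter $\Gl_r^1(\Z_p)$ at all. A nontrivial element of finite order prime to $p$ never does, since $\Gl_r^1(\Z_p)$ is torsion-free, so you would need an exponent $M$ with a prime-to-$p$ part.

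Second, and fatally, passing to powers does not preserve the Kuranishi property. Once $g_i^M\in\Gl_r^1(\Z_p)$, the $\Q_p$-Lie algebra of $\langle g_1^M,g_2^M\rangle$ is the one generated by $\log(g_1^M)$ and $\log(g_2^M)$. The Lie algebra of $H$ can be strictly larger, because $H$ also contains the conjugates $g_1^kg_2g_1^{-k}$. The argument of Corollary \ref{coro_locallyequal} does not transfer, since there $z_1,z_2$ were \emph{chosen} to generate $\Ll$.

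In fact no reduction can work: without a pro-$p$ hypothesis the statement is false. Take $p=5$, $r=6$, $K=\Q(\sqrt{-5})$ and $G=H$, where $g_1$ is the permutation matrix of $(1\,3\,5)(2\,4\,6)$, $A$ is that of $(1\,2)(3\,4)(5\,6)$ (these commute), and $g_2=\mathrm{diag}(6,6^{-1},1,1,1,1)$. Then $Ag_2A^{-1}=g_2^{-1}$, and the conjugates $g_1^kg_2g_1^{-k}$ ($k=0,1,2$) generate an open subgroup $T\cong\Z_5^3$ of $H$.
\begin{itemize}
\item[$\bullet$] By Proposition \ref{prop_imaginaryquadratic}, $P=\rho(\Gal(K_5/K))$ is a $2$-generated pro-$5$ group.
\item[$\bullet$] If $P$ were commensurable with $T$, it would contain an open normal $U\cong\Z_5^3$.
\item[$\bullet$] This $U$ would be central, since $\Gl_3(\Q_5)$ has no element of order $5$.
\item[$\bullet$] The transfer $P\to U$ would then force $P^{\rm ab}$ to have $\Z_5$-rank at least $3$, contradicting $2$-generation.
\end{itemize}

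So assume, as the paper implicitly does, that $H$ is pro-$p$ (e.g.\ $g_1,g_2\in\Gl_r^1(\Z_p)$), and delete Step 1. Your proof then coincides with the paper's.
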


  \begin{proof}  
  Set $\Delta':=\langle A \rangle $ and $\Delta=Gal(K/\Q)=\langle s\rangle $. Let $\rho_0: \Delta \stackrel{\simeq}{\rightarrow} \Delta'$. Now observe that $\Delta$ acts on $H=\langle g_1, g_2 \rangle$ and the  character of the $\Delta$-module $H^{p,el}$ is given by  $\chi(H^{p,el})=\1+\varphi$. By Theorem \ref{theorem_4.3}, we get a surjective morphism $\rho: G_{K,p} \rtimes \Delta \twoheadrightarrow  H \rtimes \Delta'$, such that $\rho(s)=A$. We conclude by noting that $H\rtimes \Delta'\hookrightarrow \Gl_r(\Q_p)$. 
   
   Let $c$ denote the (conjugacy class of) complex conjugation. By the Schur-Zassenhaus theorem, $A$ and $\rho(c)$ belong to the same conjugacy class. Hence, we have $i(\rho)=|{\rm tr}(A)|$.\end{proof}

Let $G$ be a uniform subgroup of $\Gl_r(\Z_p)$ whose associated  $\Q_p$-Lie algebra $\Gg(\Q_p)$ is semisimple. To apply Theorem \ref{theorem_4.3_bis} with $i(\rho)=a$, we therefore need to find a Kuranishi pair $(z_1,z_2)$ in the $\Z_p$-Lie algebra $\Gg:=\log(G)$ and a matrix~$A \in \Gl_r(\Z_p)$ of order $2$ satisfying  $|{\rm tr}(A)|=a$ such that conjugation by $A$ acts as $+1$ on $z_1$ and as $-1$ on $z_2$. In the next section, we verify that these conditions are satisfied for the $\Z_p$-Lie algebras $\sl_r$ and $\so_r$.


\section{Lie algebra generators compatible with the involution} \label{section_Lie_algebra}

Let $r \geq 2$ be an integer. We fix a partition $r=m+n$ with $m \geq n \geq 1$, and let $d:=d_{m,n}$ denote the $r \times r$ diagonal matrix whose first $m$ diagonal entries are $1$ and whose last $n$ diagonal entries are $-1$. In this section, we refine Theorem \ref{theo_kuranishi} in the cases of $\sl_r(\Q_p)$ and $\mathfrak{so}_r(\Q_p)$ by taking into account the involutive automorphism given by conjugation by $d$. This leads to the following result. Throughout this section, $\mathfrak{sl}_r$ and $\mathfrak{so}_r$ denote $\mathfrak{sl}_r(\Z_p)$ and $\mathfrak{so}_r(\Z_p)$, respectively. 

\begin{prop}\label{prop-Lie generator}
Let $\mathfrak{g}$ be either $\sl_r$ or $\so_r$, and let $\mathfrak{g}^+$ and $\mathfrak{g}^-$ denote the subspaces of $\mathfrak{g}$ on which the involution $x \to dxd^{-1}$ acts as multiplication by $1$ and $-1$, respectively.
\begin{enumerate}
\item[$(1)$] There exist $\alpha \in \mathfrak{g}^+$ and $\beta \in \mathfrak{g}^-$ such that the $\Z_p$-Lie subalgebra $\langle \alpha, \beta \rangle$ of $\Gg$ has the same $\Z_p$-rank as $\Gg$.
\item[$(2)$] Assume that $k \in \NN$ satisfies $p^k > m(m+2)(m+3)$. Then the elements $\alpha \in \mathfrak{sl}_r^+$ and $\beta \in \mathfrak{sl}^-_r$ may moreover be chosen so that $\langle \alpha, \beta \rangle \supseteq \mathfrak{sl}^{2(k-1)(2mn-1)}_r$. 
\item[$(3)$] Assume that $k \in \NN$ satisfies $p^k > 6(\lfloor m/2 \rfloor+1)\cdot \lfloor n/2 \rfloor+2 \lfloor m/2 \rfloor$. Then the elements $\alpha \in \mathfrak{so}^+_r$ and $\beta \in \mathfrak{so}_r^-$ may moreover be chosen so that $\langle \alpha, \beta \rangle \supseteq \mathfrak{so}^{2(k-1)(mn-1)}_r$.
\end{enumerate}
\end{prop}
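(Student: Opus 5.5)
The plan is to choose $\alpha$ as a diagonal matrix and $\beta$ as an element of the off-diagonal block space. Then repeated brackets with $\alpha$ separate $\beta$ into root-vector components, and brackets of these components regenerate the whole algebra.

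For $\mathfrak{sl}_r$, we have $\mathfrak{sl}_r^+ = \mathfrak{s}(\mathfrak{gl}_m\oplus\mathfrak{gl}_n)$, the block-diagonal trace-zero matrices. The space $\mathfrak{sl}_r^-$ is spanned by the $E_{i,j}$ with $i\le m<j$ or $j\le m<i$. Take $\alpha=\mathrm{diag}(\lambda_1,\dots,\lambda_r)$ with trace zero and integer entries, chosen so that the differences $\lambda_i-\lambda_j$ for $i\le m<j$ are pairwise distinct. The intended choice is something like $\lambda_i=i$ for $i\le m$ and $\lambda_{m+j}=-(m+1)j$, corrected to achieve trace zero, so that the differences are bounded by roughly $m(m+2)(m+3)$. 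Take $\beta=\sum_{i\le m<j}(E_{i,j}+E_{j,i})$. Then $\mathrm{ad}(\alpha)^\ell\beta=\sum \mu_{ij}^\ell E_{i,j}+\sum(-\mu_{ij})^\ell E_{j,i}$ with $\mu_{ij}=\lambda_i-\lambda_j$.

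Using $\ell=0,\dots,2mn-1$ gives a Vandermonde system in the $2mn$ distinct values $\pm\mu_{ij}$. Over $\Q_p$ this isolates every $E_{i,j}$ and $E_{j,i}$ with $i\le m<j$. The brackets $[E_{i,j},E_{j,i'}]=E_{i,i'}$ and $[E_{j,i},E_{i,j'}]=E_{j,j'}$ then give all off-diagonal matrices inside the blocks. Together with the coroots $E_{i,i}-E_{j,j}$ this generates $\mathfrak{sl}_r(\Q_p)$, which proves (1) after clearing denominators.

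For (2) the inversion has to be made integral. The Vandermonde determinant is $\prod(\nu-\nu')$ over pairs of distinct values among the $\pm\mu_{ij}$. If every such difference has absolute value below $p^k$, each factor has $p$-adic valuation at most $k-1$. Hence $p^{(k-1)(2mn-1)}$ times each $E_{i,j}$ (root outside the blocks) lies in $\langle\alpha,\beta\rangle$. This is where the hypothesis $p^k>m(m+2)(m+3)$ enters, and the $\lambda$'s must be chosen so that all differences $\nu-\nu'$ are bounded by that quantity.

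One bracket of two such elements produces the remaining root vectors and the Cartan elements with exponent $2(k-1)(2mn-1)$. Since the $\Z_p$-span of the root vectors and the $E_{i,i}-E_{i+1,i+1}$ is $\mathfrak{sl}_r$, we get $\langle\alpha,\beta\rangle\supseteq \mathfrak{sl}_r^{2(k-1)(2mn-1)}$.

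For $\mathfrak{so}_r$ the argument is parallel, using a Cartan subalgebra of $\mathfrak{so}_m\oplus\mathfrak{so}_n$. Take $\alpha$ to be block-diagonal, built from $2\times2$ rotation generators $\lambda_s(E_{2s-1,2s}-E_{2s,2s-1})$ in each block; this is where $\lfloor m/2\rfloor$ and $\lfloor n/2\rfloor$ appear. Take $\beta=\sum_{i\le m<j}(E_{i,j}-E_{j,i})$. Now $\mathrm{ad}(\alpha)$ acts on the $mn$-dimensional space $\mathfrak{so}_r^-$ with eigenvalues $\pm\sqrt{-1}(\lambda_s\pm\lambda'_t)$ and $0$, and these lie over $\Q_p$ only after adjoining $\sqrt{-1}$. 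I would avoid the extension by applying a polynomial in $\mathrm{ad}(\alpha)^2$, which has rational eigenvalues $-(\lambda_s\pm\lambda'_t)^2$, to project onto the $\mathrm{ad}(\alpha)^2$-isotypic $2$- or $4$-dimensional pieces. Further applications of $\mathrm{ad}(\alpha)$ then span each piece, and brackets regenerate $\mathfrak{so}_m\oplus\mathfrak{so}_n$ and hence all of $\mathfrak{so}_r$, since $\mathfrak{so}_r^-$ generates. The integral bound again comes from a Vandermonde-type determinant with $mn$ nodes, giving the exponent $(k-1)(mn-1)$ and then its double after one bracket. The stated bound on $p^k$ should be the maximal difference of the eigenvalues for a choice such as $\lambda_s=s$ and $\lambda'_t=3(\lfloor m/2\rfloor+1)t$.

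The main obstacle is this orthogonal case: choosing the $\lambda$'s so that all $\mathrm{ad}(\alpha)^2$-eigenvalues on $\mathfrak{so}_r^-$ are distinct, handling the odd $m$ or $n$ cases where zero weights occur, and checking that the resulting pieces actually generate $\mathfrak{so}_r$ with the claimed exponent rather than a worse one.
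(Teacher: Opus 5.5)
Your $\mathfrak{sl}_r$ argument follows the paper's strategy: a diagonal trace-zero $\alpha$, $\beta$ equal to the sum of all off-block root vectors, Vandermonde separation of the $\mathrm{ad}_\alpha^j\beta$, then $[\mathfrak g^-,\mathfrak g^-]=\mathfrak g^+$. However, the two quantitative steps that make up part $(2)$ are not established.

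First, the choice of eigenvalues. You need trace zero, the $2mn$ numbers $\pm(\lambda_i-\lambda_{m+j})$ pairwise distinct, and all of them of absolute value $<p^k/2$. ``Corrected to achieve trace zero'' is exactly where this can fail. For $(m,n)=(3,2)$, absorbing the trace into the last entry of your pattern gives $(1,2,3,-4,-2)$, with $\lambda_1-\lambda_4=\lambda_3-\lambda_5$. Subtracting the mean instead is impossible in $\Z_p$ when $p\mid r$. Lemma~\ref{lemm - eigenvalues}$(1)$ handles this in two ways. It takes all $\lambda_i<0<\mu_j$, so no value is the negative of another. It also puts the trace correction into $\mu_n$, separated from $\mu_{n-1}$ by more than $|\lambda_m-\lambda_1|$; the size of $\mu_n$ is what produces the bound $m(m+2)(m+3)$.

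Second, the exponent. The Vandermonde determinant (via Cramer) only yields the exponent $(k-1)\binom{2mn}{2}$. To get $(k-1)(2mn-1)$ you need that the entries of $V^{-1}$ attached to a node $\nu$ (the coefficients of the Lagrange polynomial at $\nu$) have denominator $\prod_{\nu'\neq\nu}(\nu-\nu')$. Concretely, applying the integral polynomial $\prod_{\nu'\neq\nu}(\mathrm{ad}_\alpha-\nu')$ to $\beta$ gives $\prod_{\nu'\neq\nu}(\nu-\nu')E_\nu\in\langle\alpha,\beta\rangle$. This is Lemma~\ref{lemm - Vandermonde}.

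For $\mathfrak{so}_r$ you stop where the work begins. Also, the list of weights must include $\pm\sqrt{-1}\lambda_s$ when $n$ is odd and $\pm\sqrt{-1}\lambda'_t$ when $m$ is odd.

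The paper avoids isotypic bookkeeping by extending scalars to $\O_K$, with $K=\Q_p(z)$ and $z=\sqrt{-1}$. The elements $(xy^t)_{\mathfrak{so}}$, with $x,y$ running over the eigenvectors $\pm z e_{2s-1}+e_{2s}$ (plus $e_m$, resp.\ $e_n$, in odd size) of the two diagonal blocks of $\alpha$, form an eigenbasis of $\mathrm{ad}_\alpha$. Its change-of-basis matrix lies in $\Gl_{mn}(\O_K)$, since each $2\times 2$ block has determinant $2z$, a unit for odd $p$. One then takes $\beta$ with unit coordinates, and Lemma~\ref{lemm - Vandermonde} applies over $\O_K$ and descends to $\Z_p$. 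Your all-ones $\beta$ would also do: $e_{2s-1}+e_{2s}=\frac{1-z}{2}(ze_{2s-1}+e_{2s})+\frac{1+z}{2}(-ze_{2s-1}+e_{2s})$, and $(1-z)(1+z)=2$.

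Your rational route via $\mathrm{ad}_\alpha^2$ can also be completed, but it needs checks you have not made:
\begin{itemize}
\item On each $2$-dimensional piece, $\mathrm{ad}_\alpha$ acts as $\pm\nu\left(\begin{smallmatrix}0&1\\-1&0\end{smallmatrix}\right)$ in the $\Z_p$-basis $u_1=(E_{2s,2t}\mp E_{2s-1,2t-1})_{\mathfrak{so}}$, $u_2=(E_{2s-1,2t}\pm E_{2s,2t-1})_{\mathfrak{so}}$.
\item Hence the component $c_1u_1+c_2u_2$ of $\beta$ and its image span a sublattice of index $|\nu(c_1^2+c_2^2)|_p^{-1}$. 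You need $c_1^2+c_2^2\in\Z_p^\times$, which is not automatic when $p\equiv 1\bmod 4$.
\item The total exponent must then be shown to be at most $(k-1)(mn-1)$.
\end{itemize}

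Finally, your suggested $\lambda_s=s$, $\lambda'_t=3(\lfloor m/2\rfloor+1)t$ does give distinctness and the bound in $(3)$. Each weight is $a+3(\lfloor m/2\rfloor+1)b$ with $|a|\le\lfloor m/2\rfloor$, which determines $(a,b)$. All differences are at most $6(\lfloor m/2\rfloor+1)\lfloor n/2\rfloor+2\lfloor m/2\rfloor$. The proposal, however, leaves this unverified.
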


Proposition \ref{prop-Lie generator} implies that if $p$ is sufficiently large relative to $r$, then each of $\mathfrak{so}_r$ and $\mathfrak{sl}_r$ admits two generators, one lying in the $1$-eigenspace and the other in the $-1$-eigenspace.

The following variant of Proposition \ref{prop-Lie generator} is obtained by slightly modifying its proof and will be used in the proofs of the main theorems presented in the next section.

\begin{prop}\label{prop - Kuranashi - p}
\begin{itemize}
\item[$(1)$] Assume that $k \in \NN$ satisfies $p^k > m(m+2)(m+3)$. Then there exist $\alpha \in p\mathfrak{sl}^+_r$ and $\beta \in p\mathfrak{sl}^{-}_r$ such that $\langle \alpha, \beta \rangle \supseteq \mathfrak{sl}^{4mnk-2k+2}_r$.
\item[$(2)$] Assume that $k \in \NN$ satisfies $p^k > 6(\lfloor m/2 \rfloor+1)\cdot \lfloor n/2 \rfloor+2 \lfloor m/2 \rfloor$. Then there exist $\alpha \in p\mathfrak{so}^+_r$ and $\beta \in p\mathfrak{so}^{-}_r$ such that $\langle \alpha, \beta \rangle \supseteq \mathfrak{so}_r^{2mnk-2k+2}$.
\end{itemize}
\end{prop}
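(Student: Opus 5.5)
The plan is to deduce Proposition \ref{prop - Kuranashi - p} from the proof of Proposition \ref{prop-Lie generator}(2),(3) by rescaling the generators and keeping track of bracket lengths. Rescaling alone loses control of the exponent, so the length bound is the real content.

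First, take $\alpha',\beta'$ as provided by Proposition \ref{prop-Lie generator}(2) (resp. (3)) and set $\alpha=p\alpha'\in p\Gg^+$, $\beta=p\beta'\in p\Gg^-$. The eigenspace conditions survive, since $x\mapsto dxd^{-1}$ is $\Z_p$-linear.

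Second, relate $\langle\alpha,\beta\rangle$ to $\langle\alpha',\beta'\rangle$. For a Lie word $w$ of length $\ell$ in two letters we have $w(p\alpha',p\beta')=p^{\ell}w(\alpha',\beta')$. Suppose $x=\sum_w c_w\,w(\alpha',\beta')$ with $c_w\in\Z_p$ and every word of length at most $L$. Then
$$p^{L}x=\sum_w c_w\,p^{L-\ell(w)}\,w(\alpha,\beta)\in\langle\alpha,\beta\rangle .$$
So, if we can show that the lattice $\Gg^{e}$ obtained in Proposition \ref{prop-Lie generator} is spanned by elements expressible through words of length at most $L$, we get $\langle\alpha,\beta\rangle\supseteq p^{L}\Gg^{e}=\Gg^{e+L}$. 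Because $\langle\alpha,\beta\rangle$ is closed, it suffices to treat a $\Z_p$-basis of $\Gg^{e}$.

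Third, match the exponents. For $\sl_r$ we have $e=2(k-1)(2mn-1)=4mnk-4mn-2k+2$, so the target $4mnk-2k+2$ requires $L=4mn$. For $\so_r$ we have $e=2(k-1)(mn-1)=2mnk-2mn-2k+2$, so the target requires $L=2mn$. The remaining work is to reread the construction behind Proposition \ref{prop-Lie generator} and check these length bounds.

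In $\sl_r$, $\alpha'$ is a generic diagonal-block element and $\beta'$ an off-diagonal-block element. One isolates root vectors of $\Gg^-$ (there are $2mn$ of them) from iterates $\mathrm{ad}(\alpha')^j\beta'$ with $j<2mn$ by a Vandermonde argument. This is where the losses $p^{(k-1)}$ per step come from: the eigenvalue differences are controlled by the hypothesis $p^k>m(m+2)(m+3)$. One then obtains $\Gg^+$ as brackets of two such elements. Each needed basis element should therefore be a combination of words of length at most $2\cdot 2mn=4mn$ in $\sl_r$, and similarly at most $2mn$ in $\so_r$, where $\dim\so_r^-=mn$.

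The main obstacle is this bookkeeping. One has to confirm that no step of the original proof uses brackets longer than these bounds, for instance when clearing denominators of Vandermonde inverses. If some step does, I would instead redo the Vandermonde/bracket argument directly with $\alpha=p\alpha'$, $\beta=p\beta'$. The eigenvalues of $\mathrm{ad}\,\alpha$ then all carry an extra factor $p$, and one tracks the resulting powers of $p$ explicitly in each step.
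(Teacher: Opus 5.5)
Your argument is correct and is essentially the paper's proof. The paper also takes $p\alpha$, $p\beta$ with $\alpha,\beta$ from the proof of Proposition \ref{prop-Lie generator}, and uses $\mathrm{ad}^j_{p\alpha}(p\beta)=p^{j+1}\mathrm{ad}^j_\alpha(\beta)$ with $j+1\le \rk_{\Z_p}\Gg^-$ to get $p^{k\,\rk_{\Z_p}\Gg^- -k+1}\Gg^-$. It then applies a single bracket via Lemma \ref{lemm- conjugation}, which is exactly your word-length bound $2\rk_{\Z_p}\Gg^-$ ($=4mn$, resp. $2mn$).

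The caveat you raise about the Vandermonde step is unnecessary. That step only concerns the $\Z_p$-span of the iterates $\mathrm{ad}^j_\alpha(\beta)$ with $j<\rk_{\Z_p}\Gg^-$, and involves no further brackets.
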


We first introduce some notation. For $\mathfrak{g}=\sl_r$ or $\so_r$, every element $x \in \mathfrak{g}$ admits a block decomposition
\begin{equation*}
x=\begin{pmatrix}
    A & B_1 \\
    B_2 & C
\end{pmatrix}, \quad A \in M_m(\Z_p),\quad B_1 \in M_{m,n}(\Z_p), \quad B_2 \in M_{n,m}(\Z_p), \quad C \in M_n(\Z_p).
\end{equation*}
A direct computation shows that $\mathfrak{g}^+$ consists precisely of those elements with $
B_1=0$ and $B_2=0$, whereas $\mathfrak{g}^-$ consists precisely of those for which $A=0$ and $C=0$. Accordingly, we write elements of $\Gg^+$ and $\Gg^-$ as $(A,C)^{\! \scriptscriptstyle +}$ and $(B_1, B_2)^{\! \scriptscriptstyle -}$, respectively.

For $\mathfrak{g}=\mathfrak{so}_r$, we further have $B_1^t=-B_2$. Accordingly, for $B \in M_{m , n}(\Z_p)$, we write $(B)_{\mathfrak{so}}$ for the element $(B, -B^t)^{\! \scriptscriptstyle -} \in \mathfrak{g}^-$. Since conjugation by $d$ is a Lie algebra automorphism of $\Gg$, we have $[\Gg^- \,\, \Gg^-] \subset \Gg^+$. Moreover, each $x\in \mathfrak{g}^+$ induces a $\Z_p$-linear endomorphism $\mathrm{ad}_x : \mathfrak{g}^- \to \mathfrak{g}^-$ of $\mathfrak{g}^-$ defined by $\mathrm{ad}_x(y):=[x \,\,y]$. We also note that $\rk_{\Z_p}$, which denotes the $\Z_p$-rank, satisfies $\rk_{\Z_p} \mathfrak{sl}_r^{-} = 2mn$ and $\rk_{\Z_p} \mathfrak{so}^-_r = mn$.

\begin{lemm}\label{lemm- conjugation}
For $\mathfrak{g}=\mathfrak{sl}_r$ or $\mathfrak{so}_r$, we have $\mathfrak{g}^+=[\mathfrak{g}^- \, \mathfrak{g}^-]$.
\end{lemm}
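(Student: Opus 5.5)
The plan is to prove the equality by a direct computation with elementary matrices. The inclusion $[\mathfrak{g}^- \, \mathfrak{g}^-]\subset \mathfrak{g}^+$ was already observed above, so only the reverse inclusion needs work. For that I will exhibit a $\Z_p$-basis of $\mathfrak{g}^+$, for both $\mathfrak{g}=\mathfrak{sl}_r$ and $\mathfrak{g}=\mathfrak{so}_r$, consisting of brackets of elements of $\mathfrak{g}^-$, or $\Z_p$-combinations of such brackets. Since everything is integral, the equality then holds over $\Z_p$ itself and not merely after tensoring with $\Q_p$.

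\emph{Case $\mathfrak{sl}_r$.} Here $\mathfrak{g}^+=\{(A,C)^{+} : \mathrm{tr}A+\mathrm{tr}C=0\}$. Block multiplication gives
$$\bigl[(B_1,0)^{-},(0,B_2)^{-}\bigr]=(B_1B_2,\,-B_2B_1)^{+}.$$
I then take elementary matrices, using the indices $1\le i,k\le m$ and $1\le j,l\le n$.
\begin{itemize}
\item With $B_1=E_{ij}$, $B_2=E_{jk}$ and $i\neq k$, the bracket is $(E_{ik},0)^{+}$. These give all off-diagonal entries of the $A$-block.
\item With $B_1=E_{il}$, $B_2=E_{ji}$ and $j\neq l$, the bracket is $(0,-E_{jl})^{+}$. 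These give all off-diagonal entries of the $C$-block.
\item With $B_1=E_{ij}$, $B_2=E_{ji}$, the bracket is $(E_{ii},-E_{jj})^{+}$. Taking differences of such elements yields every $E_{ii}-E_{i'i'}$ and every $E_{jj}-E_{j'j'}$. Hence these brackets span the full lattice of trace-zero diagonal matrices.
\end{itemize}
Together these span $\mathfrak{sl}_r^+$ over $\Z_p$.

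\emph{Case $\mathfrak{so}_r$.} Here $\mathfrak{g}^+=\mathfrak{so}_m\oplus\mathfrak{so}_n$, embedded block-diagonally. For $B,B'\in M_{m,n}(\Z_p)$ a direct computation gives
$$\bigl[(B)_{\mathfrak{so}},(B')_{\mathfrak{so}}\bigr]=\bigl(B'B^t-BB'^t,\;B'^tB-B^tB'\bigr)^{+}.$$
\begin{itemize}
\item Taking $B=E_{ij}$, $B'=E_{kj}$ with $i\neq k$, the $C$-part vanishes and the bracket is $(E_{ki}-E_{ik},0)^{+}$. These elements form the standard $\Z_p$-basis of $\mathfrak{so}_m$.
\item Symmetrically, $B=E_{ij}$, $B'=E_{il}$ with $j\neq l$ gives $\pm(0,E_{lj}-E_{jl})^{+}$. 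These form the standard basis of $\mathfrak{so}_n$.
\item When $m=1$ or $n=1$, the corresponding factor is $0$, so there is nothing to prove for it.
\end{itemize}

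There is no real obstacle here. The only points needing care are the block multiplications (signs and which products vanish) and the check that the diagonal part of $\mathfrak{sl}_r^+$ is reached integrally. The latter holds because the elements $(E_{ii},-E_{jj})^{+}$ span the trace-zero diagonal lattice with no denominators. This uses $n\ge 1$, so that every index $i$ of the $A$-block can be paired with some index $j$ of the $C$-block, and vice versa.
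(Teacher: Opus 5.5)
Your proof is correct and is essentially the paper's argument: both exhibit a $\Z_p$-basis of $\mathfrak{g}^+$ made of explicit brackets of elementary elements of $\mathfrak{g}^-$. The paper simply fixes the intermediate index to be $1$, e.g. $(E_{i,j},0)^{+}=[(E_{i,1},0)^{-}\,\,(0,E_{1,j})^{-}]$ and $(E_{i,j}-E_{j,i},0)^{+}=[(E_{i,1})_{\mathfrak{so}}\,\,(-E_{j,1})_{\mathfrak{so}}]$, whereas your block-multiplication formulas are correct for general indices, and you spell out the integral spanning of the trace-zero diagonal lattice by the elements $(E_{ii},-E_{jj})^{+}$, which the paper leaves implicit.
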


\begin{proof}
It suffices to show that $[\Gg^- \,\,\Gg^-]$ contains a $\Z_p$-basis of $\Gg^+$. For $\Gg=\sl_r$, this follows from the identities
\begin{equation*}
  (E_{i,j},0)^{\! \scriptscriptstyle +} = [(E_{i, 1}, 0)^{\! \scriptscriptstyle -} \,\, (0, E_{1,j})^{\! \scriptscriptstyle -}], \qquad   (0,E_{i,j})^{\! \scriptscriptstyle +}=[(0, E_{i,1})^{\! \scriptscriptstyle -} \,\, (E_{1,j}, 0)^{\! \scriptscriptstyle -}]
\end{equation*}
for $i \neq j$, together with the identities $(E_{i,i}, -E_{j,j})^{\! \scriptscriptstyle +}=[(E_{i,j},0)^{\! \scriptscriptstyle -} \ (0,E_{j,i})^{\! \scriptscriptstyle -}]$ for $1 \leq i \leq m$ and $1 \leq j \leq n$. Similarly, the claim for $\mathfrak{so}_r$ follows from the identities
\begin{equation*}
    (E_{i,j}-E_{j,i},0)^{\! \scriptscriptstyle +} = [ (E_{i,1})_{\mathfrak{so}}\,\, (-E_{j,1})_{\mathfrak{so}}], \qquad (0, E_{i,j}-E_{j,i})^{\! \scriptscriptstyle +} = [-(E_{1,i})_{\mathfrak{so}} \,\, (E_{1,j})_{\mathfrak{so}}],
\end{equation*}
for $i \neq j$.
\end{proof}
We isolate the following technical lemma.
\begin{lemm}\label{lemm - eigenvalues}
\begin{enumerate}
\item[$(1)$] If $p^k > m(m+2)(m+3)$, there exist elements $\{\lambda_i\}_{1 \leq i \leq m}$ and $\{ \mu_j\}_{1 \leq j \leq n}$ in $\Z_p$ such that $\sum_{i=1}^m \lambda_i + \sum_{j=1}^n \mu_j=0$ and the $2mn$ elements of $\pm (\lambda_i - \mu_j)$ are pairwise distinct modulo $p^k$.
\item[$(2)$] If $p^k > 6(\lfloor m/2 \rfloor+1) \cdot\lfloor n/2 \rfloor+2\lfloor m/2 \rfloor$, then there exist elements $\{\lambda_i\}_{1 \leq i \leq \lfloor m/2 \rfloor}$ and $\{ \mu_j\}_{1 \leq j \leq \lfloor n/2 \rfloor}$ in $\Z_p$ such that the elements $ \pm \lambda_i \pm \mu_j$,  $\pm \mu_j$, $\pm\lambda_i$, $0$ are all distinct modulo $p^k$.
\end{enumerate}
\end{lemm}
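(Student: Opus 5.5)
Both parts follow the same strategy. We construct the required elements as \emph{integers} of small absolute value such that all the listed quantities are pairwise distinct as integers, and every listed quantity has absolute value $< p^k/2$. Then any two of them differ by a nonzero integer of absolute value $< p^k$, so they remain distinct modulo $p^k$. Thus the problem reduces to an explicit combinatorial construction in $\Z$, and the hypotheses on $p^k$ are used only to bound the size of the construction.

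For $(2)$, write $m'=\lfloor m/2\rfloor$ and $n'=\lfloor n/2\rfloor$. I will take $\mu_j=j$ for $1\le j\le n'$ and $\lambda_i=i(2n'+1)$ for $1\le i\le m'$. The values $\lambda_i+e$ with $e\in\{-\mu_j,0,\mu_j\}$ lie in the window $i(2n'+1)+[-n',n']$. These windows are pairwise disjoint for distinct $i$ and consist of positive integers, and within a window the offsets $e$ are distinct. The $\mu_j$ lie in $[1,n']$, which is disjoint from all windows since $i(2n'+1)-n'\ge n'+1$. Hence the numbers $\lambda_i\pm\mu_j$, $\lambda_i$, $\mu_j$ are distinct positive integers. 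Together with their negatives and $0$, all the required elements are then pairwise distinct integers. Their absolute values are at most $m'(2n'+1)+n'$. So it suffices that $2(m'(2n'+1)+n')<p^k$, and this follows from the hypothesis because $4m'n'+2m'+2n'\le 6(m'+1)n'+2m'$. (The cases $m'=0$ or $n'=0$ are degenerate and handled the same way.)

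For $(1)$ the additional difficulty is the linear constraint $\sum\lambda_i+\sum\mu_j=0$, and I expect this to be the main obstacle. My plan is to let one variable absorb it and be large enough to be separated from the rest. Set $\mu_j=j$ for $1\le j\le n$ and $\lambda_i=-i(2n+1)$ for $1\le i\le m-1$. The differences $\lambda_i-\mu_j=-(i(2n+1)+j)$ are then distinct negative integers with absolute value at most $(m-1)(2n+1)+n$. Next define $\lambda_m:=-\sum_{i<m}\lambda_i-\sum_j\mu_j=(2n+1)\frac{m(m-1)}{2}-\frac{n(n+1)}{2}$. I then need to check, using $m\ge n$, that $\lambda_m-n$ exceeds $(m-1)(2n+1)+n$ when $m\ge 3$; the small cases $m\le 2$ will be checked by hand or by a slight modification such as swapping the roles of signs. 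Under this check, the differences $\lambda_m-\mu_j$ are distinct positive integers larger in absolute value than all the other differences. Consequently the $2mn$ numbers $\pm(\lambda_i-\mu_j)$ are pairwise distinct integers, and none of them is $0$.

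Finally, I will bound the largest absolute value, which is $\lambda_m-1\le (2m+1)\frac{m(m-1)}{2}$. Twice this is at most $m(m+2)(m+3)$, so the hypothesis $p^k>m(m+2)(m+3)$ gives distinctness modulo $p^k$. Should the separation inequality fail in some small case, the fallback is to enlarge the spacing constant $2n+1$ to something like $m+3$, which stays within the cubic bound.
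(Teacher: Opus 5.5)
Your part $(2)$ is fine. The choice $\mu_j=j$, $\lambda_i=i(2n'+1)$ (with $n'=\lfloor n/2\rfloor$) is a valid alternative to the paper's $\mu_j=(\lfloor m/2\rfloor+1)j$, $\lambda_i=2(\lfloor m/2\rfloor+1)\lfloor n/2\rfloor+i$, and it uses the same reduction to integers of absolute value $<p^k/2$.

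Part $(1)$, however, has a genuine gap, and it fails in two independent ways.

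First, the size estimate is wrong. The inequality $(2m+1)m(m-1)\le m(m+2)(m+3)$ is equivalent to $(m-7)(m+1)\le 0$, so it fails for $m\ge 8$. The true maximum fails as well: for $m=n=9$ you get $\lambda_9-1=638>594=m(m+2)(m+3)/2$. With spacing $2n+1$ the absorbing variable is about $nm^2$; for $n$ close to $m$ that is about $m^3$, twice what the hypothesis allows.

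Second, since $\lambda_m>0$ while $\lambda_1,\dots,\lambda_{m-1}<0$, you depend on the separation inequality. It is equivalent to $(2n+1)(m-1)(m-2)>n(n+5)$, which is false for $m=3$, contrary to your claim. The construction genuinely collides there and at $m=2$:
\begin{itemize}
\item for $(m,n)=(3,1)$ you get $\lambda=(-3,-6,8)$, $\mu_1=1$, and $\lambda_2-\mu_1=-7=-(\lambda_3-\mu_1)$;
\item for $(3,2)$ the value $11$ occurs both as $\lambda_3-\mu_1$ and as $-(\lambda_2-\mu_1)$;
\item for $(2,2)$ you get $\lambda_2=\mu_2=2$, so $0$ appears among the $\pm(\lambda_i-\mu_j)$.
\end{itemize}

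Your fallback spacing $m+3$ does cure the size problem and works for all $m\ge 4$. But it yields the same numbers for $(2,2)$ and still collides for $(3,2)$ and $(3,3)$, so the argument as planned does not close.

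The missing idea is to let a $\mu$, not a $\lambda$, absorb the linear constraint, so that $\lambda_i<0<\mu_j$ for all $i,j$. Then every $\lambda_i-\mu_j$ is negative. Once all values lie in $(-p^k/2,p^k/2)$, a congruence $\epsilon(\lambda_i-\mu_j)\equiv\epsilon'(\lambda_{i'}-\mu_{j'}) \pmod{p^k}$ becomes an equality of integers, which forces $\epsilon=\epsilon'$. No separation inequality and no small-case analysis are needed; one only has to rule out $\lambda_i-\lambda_{i'}=\mu_j-\mu_{j'}$.

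The paper takes $\lambda_i=-(n+1)i-1$, $\mu_j=j$ for $j<n$, and $\mu_n$ given by the zero-sum condition. This works for three reasons:
\begin{itemize}
\item the $\lambda$'s are spaced by $n+1$, more than any gap among $\mu_1,\dots,\mu_{n-1}$;
\item $\mu_n-\mu_{n-1}$ exceeds the total spread $(n+1)(m-1)$ of the $\lambda$'s;
\item the tighter spacing $n+1$ (rather than $2n+1$) gives $\mu_n-\lambda_m\le m(m+2)(m+3)/2$, exactly what $p^k>m(m+2)(m+3)$ requires.
\end{itemize}
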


\begin{proof}
Recall that $m \geq n$. To prove $(1)$, set $\lambda_i=-(n+1)i - 1$ for $1 \leq i \leq m$, $\mu_j=j$ for $1 \leq j \leq n-1$, and $$\mu_n = -\sum_{i=1}^m \lambda_i - \sum_{j=1}^{n-1} \mu_j = \frac{(n+1)m(m+1)}{2} + m - \frac{n(n-1)}{2}.$$
A direct computation shows that
\begin{align*}
  |\lambda_i-\mu_j| \leq \mu_n - \lambda_m & \leq \frac{m(m+1)(n+1)}{2} + m + (n+1)m  \\
  & \leq \frac{m(m+2)(m+1)}{2} + m(m+2) < \frac{p^k}{2},
\end{align*}
and hence $\epsilon (\lambda_i- \mu_j) \equiv \epsilon'(\lambda_{i'}-\mu_{j'}) \pmod{p^k}, \  \epsilon, \epsilon' \in \{\pm1\}$, if and only if $\lambda_i-\mu_j=\lambda_{i'}-\mu_{j'}$. It therefore remains to show that $\lambda_i - \lambda_{i'}=\mu_j - \mu_{j'}$ implies $i=i'$ and $j=j'$. This is immediate, since $|\mu_j- \mu_{j'}|$ is either less than $n$ or is at least
\begin{align*}
   \mu_n - \mu_{n-1} & = \frac{(n+1)m(m+1)-n(n-1)}{2} + m -(n-1) \\
                             & > \frac{nm(m+1)}{2} > (n+1)(m-1) = |\lambda_m-\lambda_1| \geq |\lambda_i - \lambda_{i'}|.
\end{align*}

To prove $(2)$, set $\mu_j=(\lfloor m/2 \rfloor+1) \cdot j$ for $1 \leq j \leq \lfloor n/2 \rfloor $ and $\lambda_i= 2(\lfloor m/2 \rfloor+1) \cdot \lfloor n/2 \rfloor + i$ for $1 \leq i \leq \lfloor m/2 \rfloor$. Since $|\lambda_i|+|\mu_j| < p^k/2$, two elements among $ \pm \lambda_i \pm \mu_j$,  $\pm \mu_j$, $\pm\lambda_i$, $0$ can be congruent modulo $p^k$ only if they are equal. Moreover, the inequality $|\lambda_i| > 2|\mu_j|$ implies that $\pm\lambda_i \pm \mu_j \neq \mu_{j'}$ and $\pm \lambda_i \neq \pm \mu_j$. Finally, since $|\lambda_i - \lambda_{i'}| < \lfloor m/2 \rfloor+1$, we have $\pm \lambda_i \pm \mu_j \neq \pm \lambda_{i'}$ and the elements $\pm \lambda_i \pm \mu_j$ are pairwise distinct.
\end{proof}

\begin{lemm}\label{lemm - Vandermonde} 
Let $\mathrm{M} \in M_d(\Z_p)$ be diagonalizable over a finite extension $K/\Q_p$ with pairwise distinct eigenvalues $\omega_1, \ldots, \omega_d$. Suppose that there exists an eigenbasis $\mathrm{x}_1, \ldots, \mathrm{x}_d$ with $\mathrm{M} \mathrm{x}_i=\omega_i \mathrm{x}_i$ for all $i$ such that $\mathrm{P}=[\mathrm{x}_1, \ldots, \mathrm{x}_d] \in \Gl_d(\mathcal{O}_K)$, where $\mathcal{O}_K$ is the ring of integers of $K$.
Then, for any $\mathrm{v} = \sum_{i=1}^d \alpha_i \mathrm{x}_i \in \Z_p^d$ with $\alpha_i \in \mathcal{O}_K^{\times}$, the $\Z_p$-submodule $L=\sum_{j=0}^{d-1} \Z_p\mathrm{M}^j\mathrm{v}$ of $\Z_p^d$ contains $p^N \Z_p^d$, where
\begin{equation*}
p^N=\max_{1 \leq i \leq d} \bigg \{ \prod_{j\neq i}|\omega_i-\omega_j|_p^{-1} \bigg \}.  \end{equation*}
Here, $| \cdot|_p$ denotes the $p$-adic absolute valuation normalized by $|p|_p=p^{-1}$.
\end{lemm}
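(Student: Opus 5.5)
The idea is to change basis to the eigenbasis and reduce to the invertibility of a Vandermonde matrix. Write $\mathrm{v}=\mathrm{P}\alpha$, where $\alpha=(\alpha_1,\dots,\alpha_d)^t\in\mathcal{O}_K^d$. Then
$$\mathrm{M}^j\mathrm{v}=\sum_i \alpha_i\omega_i^j\mathrm{x}_i .$$
Consequently the matrix $W=[\mathrm{v},\mathrm{M}\mathrm{v},\dots,\mathrm{M}^{d-1}\mathrm{v}]$ factors as
$$W=\mathrm{P}\,D_\alpha\,V,$$
where $D_\alpha={\rm diag}(\alpha_1,\dots,\alpha_d)$ and $V=(\omega_i^{j})_{1\le i\le d,\,0\le j\le d-1}$ is the Vandermonde matrix. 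The eigenvalues $\omega_i$ are integral over $\Z_p$, being roots of the characteristic polynomial of $\mathrm{M}\in M_d(\Z_p)$, so $V$ has entries in $\mathcal{O}_K$.

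The lemma then follows once we show that $p^N$ annihilates the cokernel. Concretely, for every $\mathrm{w}\in\Z_p^d$ we want $c\in\Z_p^d$ with $Wc=p^N\mathrm{w}$. Since $\mathrm{P}$ and $D_\alpha$ are invertible over $\mathcal{O}_K$, we have
$$c=p^N V^{-1}D_\alpha^{-1}\mathrm{P}^{-1}\mathrm{w}.$$

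The key step is to bound the denominators of $V^{-1}$. I would use the explicit inverse of a Vandermonde matrix via Lagrange interpolation. Its $i$-th column consists of the coefficients of
$$\ell_i(X)=\prod_{j\ne i}\frac{X-\omega_j}{\omega_i-\omega_j}.$$
The numerator $\prod_{j\neq i}(X-\omega_j)$ has coefficients in $\mathcal{O}_K$. Hence every entry of the $i$-th column of $V^{-1}$ has absolute value at most $\prod_{j\ne i}|\omega_i-\omega_j|_p^{-1}\le p^N$. So $p^NV^{-1}\in M_d(\mathcal{O}_K)$, and therefore $c\in\mathcal{O}_K^d$.

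The remaining point is rationality: $c$ must actually lie in $\Z_p^d$. Since $W$ has entries in $\Z_p$ and is invertible (its determinant is a unit times $\det V\ne0$), the solution $c$ of $Wc=p^N\mathrm{w}$ is unique, so it equals $p^N W^{-1}\mathrm{w}$ with $W^{-1}$ computed over $\Q_p$. This gives $c\in\Q_p^d\cap\mathcal{O}_K^d=\Z_p^d$. Thus $p^N\mathrm{w}\in L$ for every $\mathrm{w}\in\Z_p^d$, i.e. $p^N\Z_p^d\subseteq L$.

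The only point needing care is the normalization of $|\cdot|_p$ on $K$ (extended from $|p|_p=p^{-1}$), so that $N$ may be rational. In that case one reads $p^N$ as an element of $\Q_p$ of absolute value at most the stated maximum, or rounds $N$ up to an integer. The bound is unaffected.
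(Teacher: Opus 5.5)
Your proof is correct and follows the paper's route: factor $[\mathrm{v},\mathrm{M}\mathrm{v},\dots,\mathrm{M}^{d-1}\mathrm{v}]=\mathrm{P}D_\alpha V$, show that $p^N V^{-1}$ has entries in $\mathcal{O}_K$, and conclude using $M_d(\mathcal{O}_K)\cap M_d(\Q_p)=M_d(\Z_p)$. The only difference is how you bound the denominators of $V^{-1}$: you read them off the Lagrange polynomials $\ell_i$, whereas the paper uses the vanishing of the cofactors of the generic Vandermonde matrix when two variables other than $y_i$ coincide; your version is more explicit, treats $d=2$ uniformly, and correctly lands in $M_d(\mathcal{O}_K)$ (the paper's ``$\Gl$'' at that step should read $M$).
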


\begin{proof}
Since $\mathrm{M}^j\mathrm{v}=\sum_{i=1}^d \alpha_i\omega_i^j \mathrm{x}_i$, we obtain $\mathrm{A}:= \bigl[\,\mathrm{v}, \mathrm{M}\mathrm{v}, \ldots, \mathrm{M}^{d-1}\mathrm{v}\,\bigr] = \mathrm{P}  \mathrm{D}  \mathrm{V}$, where $\mathrm{D}$ denotes the diagonal matrix $\mathrm{diag}(\alpha_1,\ldots,\alpha_d)$ and $\mathrm{V}=V(\omega_1, \ldots, \omega_d)$ is the Vandermonde matrix. In particular, $\mathrm{A}$ is invertible and $L$ has finite index in $\Z_p^d$. The inclusion $p^N \Z_p^d \subseteq L$ is equivalent to $p^{N} \mathrm{A}^{-1} \in M_d(\Z_p)$. If $d=2$, then $p^N=|\det(V)|^{-1}_p$, so that $p^N V^{-1} \in \Gl_2(\O_K)$. Hence, we have $p^N A^{-1} \in \Gl_2(\O_K) \cap M_2(\Z_p) = \Gl_2(\Z_p)$. Assume henceforth that $d\ge3$. Since the $(i,j)$th minor of the generic Vandermonde matrix $V(y_1, \ldots, y_d)$ vanishes after specializing to $y_a=y_b$ for any distinct $a, b \neq i$, each entry of $\mathrm{V}^{-1}$ has a denominator dividing $\prod_{j \neq i}(\omega_i - \omega_j)$ for some $1 \leq i \leq d$. Thus, we have $p^N \mathrm{V}^{-1} \in \Gl_d(\mathcal{O}_K)$, and the proof is completed exactly as in the case $d=2$.
\end{proof}

\begin{proof}[Proof of Proposition \ref{prop-Lie generator}]
For the case $\mathfrak{g}=\mathfrak{sl}_r$, let $k \in \NN$ satisfy $p^k > m(m+2)(m+3)$. Let $A$ and $C$ be the diagonal matrices with diagonal entries $\{\lambda_i\}_{1 \leq i \leq m}$ and $\{ \mu_j \}_{1 \leq j \leq n}$, respectively, as in Lemma \ref{lemm - eigenvalues} $(1)$. Set $\alpha =(A,C)^{\! \scriptscriptstyle +} \in \sl_r^+$. Then the set $$\mathcal{B}_{\mathfrak{sl}}:=\{(E_{i,j},0)^{\! \scriptscriptstyle -}, (0, E_{j,i})^{\! \scriptscriptstyle -} \mid 1 \leq i \leq m, \quad 1 \leq j \leq n \}$$ is an eigenbasis for $\mathrm{ad}_{\alpha}$ whose eigenvalues are $\{ \pm (\lambda_i - \mu_j)\}_{i,j}$. The eigenbasis $\mathcal{B}_{\mathfrak{sl}}$ also forms a natural $\Z_p$-basis of $\sl_r^{-} \cong \Z_p^{2mn}$.

Now define $\beta =  \sum_{1 \leq i \leq m, 1 \leq j \leq n} \big ( (E_{i,j},0)^{\! \scriptscriptstyle -} + (0, E_{j,i})^{\! \scriptscriptstyle -} \big ) \in \sl_r^-$. Then $\beta$ satisfies the hypothesis on $\mathrm{v}$ in Lemma \ref{lemm - Vandermonde} with respect to the eigenbasis $\mathcal{B}_{\mathfrak{sl}}$. Moreover, by Lemma \ref{lemm - eigenvalues} $(1)$, no difference of two distinct eigenvalues is divisible by $p^k$. Hence Lemma \ref{lemm - Vandermonde} yields
\begin{equation}\label{eq-inclusion-minus}
\tag{$\star$}
   L:= \sum_{j=0}^{\rk_{\Z_p}\mathfrak{sl}_r-1} \Z_p \mathrm{ad}_{\alpha}^j(\beta) \supseteq p^{(k-1)(\rk_{\Z_p} \mathfrak{sl}_r^- -1)} \mathfrak{sl}_r^-=p^{(k-1)(2mn-1)}\sl_r^-.
\end{equation}
Furthermore, Lemma \ref{lemm- conjugation} yields $[L \,\, L] \supseteq p^{2(k-1) (2mn -1)} \mathfrak{sl}_r^+$. Therefore, we have 
\begin{equation*}
    \langle \alpha, \beta \rangle \supseteq L + [L \,\, L] \supseteq p^{(k-1)(2mn -1)} \mathfrak{sl}_r^- + p^{2(k-1)(2mn - 1)} \sl_r^+ \supset p^{2(k-1)(2mn - 1)} \mathfrak{sl}_r.
\end{equation*}
\vskip 5pt
Let us now consider the case $\mathfrak{g}=\mathfrak{so}_r$. We first note that $\so_r^+=0$ if and only if $m=n=1$. In this case, we may take $\alpha=0$ and let $\beta$ be a $\Z_p$-generator of $\so_2^- \cong \Z_p$. Hence, we may assume that $m \geq 2$. 

Let $k \in \NN$ satisfy $p^k > 6(\lfloor m/2 \rfloor+1)\cdot \lfloor n/2 \rfloor+2 \lfloor m/2 \rfloor$, and let $\alpha=(A,C)^{\! \scriptscriptstyle +} \in \so_r^+$, where
\begin{equation*}
A= \sum_{i=1}^{\lfloor m/2 \rfloor} \lambda_i (E_{2i, 2i-1} - E_{2i-1, 2i}) \in M_m(\Z_p), \quad   C= \sum_{j=1}^{\lfloor n/2 \rfloor} \mu_j (E_{2j, 2j-1} - E_{2j-1, 2j}) \in M_n(\Z_p)  
\end{equation*}
with $\lambda_i$ and $\mu_j$ as in Lemma \ref{lemm - eigenvalues} $(2)$. Fix a square root $z$ of $-1$. Then, for each $1 \leq i \leq \lfloor m/2 \rfloor$, the vectors
\begin{equation*}
\hat{v}_i:= z e_{2i-1} + e_{2i} \in \Q_p(z)^m, \quad  \tilde{v}_i : = -z e_{2i-1} + e_{2i} \in \Q_p(z)^m  
\end{equation*}
are eigenvectors of $A$ with eigenvalues $\lambda_i z$ and $-\lambda_i z$, respectively. If $m$ is odd, then the standard basis vector $e_m \in \Q_p^m$ is an eigenvector of $A$ with eigenvalue $0$. Similarly, for each $1 \leq j \leq \lfloor n/2 \rfloor$, let $\hat{w}_j$ and $\tilde{w}_j$ denote the eigenvectors of $C$ with eigenvalues $\mu_j z$ and $-\mu_j z$, respectively. If $n$ is odd, then $e_n \in \Q_p^n$ is also an eigenvector of $C$. Since the eigenvalues of $A$ and $C$ are simple, the above eigenvectors form eigenbases $\mathcal{B}_A$ and $\mathcal{B}_C$ of $A$ and $C$, respectively.

By abuse of notation, we also write $(B)_{\mathfrak{so}}$ for the corresponding element of $\mathcal{O}_{\Q_p(z)} \otimes_{\Z_p} \mathfrak{so}_r^-$, whenever $B \in M_{m,n}(\mathcal{O}_{\Q_p(z)})$. Then $\mathcal{B}_{\mathfrak{so}}:=\{(x \cdot y^t)_{\mathfrak{so}} \mid x \in \mathcal{B}_A, y \in \mathcal{B}_C\} \subset \Q_p(z) \otimes_{\Z_p} \mathfrak{so}_r^-$ forms an eigenbasis of $\mathrm{ad}_\alpha$. Indeed, we have
\begin{equation*}
    \mathrm{ad}_{\alpha}((x \cdot y^t)_{\mathfrak{so}}) = (Ax \cdot y^t)_{\mathfrak{so}} - (x \cdot y^t C)_{\mathfrak{so}} = (Ax \cdot y^t)_{\mathfrak{so}} + (x \cdot (Cy)^t)_{\mathfrak{so}}.
\end{equation*} 
By Lemma~\ref{lemm - eigenvalues} $(2)$, the eigenvalues of $\mathrm{ad}_{\alpha}$ are pairwise distinct modulo $p^k$.

To apply Lemma \ref{lemm - Vandermonde}, we identify $\mathfrak{so}_r^-$ with $\Z_p^{mn}$ via the $\Z_p$-basis $\{(E_{ij})_{\mathfrak{so}}\}_{i,j}$.  Under this identification, the eigenbasis $\mathcal{B}_{\mathfrak{so}}$ satisfies the hypothesis on $\mathrm{P}$ in Lemma~\ref{lemm - Vandermonde}, since the change-of-basis matrices from the standard bases of $\Q_p^m$ (resp. $\Q_p^n$) to the eigenbasis $\mathcal{B}_A$ (resp. $\mathcal{B}_C$) belongs to $\Gl_m(\mathcal{O}_{\Q_p(z)})$ (resp. $\Gl_n(\mathcal{O}_{\Q_p(z)})$). We next observe that $\hat{v}_i + \tilde{v}_i = 2e_{2i} \in \Z_p^m$ and $\hat{w}_j+\tilde{w}_j = 2e_{2j} \in \Z_p^n$. Hence, the element $\beta=(S)_{\mathfrak{so}} \in \mathfrak{so}_r^-$, where
\begin{equation*}
    S= \bigg ( \sum_{i=1}^{\lfloor m/2 \rfloor} (\hat{v}_i + \tilde{v}_i ) + (m-2\lfloor m/2 \rfloor ) e_m \bigg) \cdot \bigg ( \sum_{j=1}^{\lfloor n/2 \rfloor} (\hat{w}_j + \tilde{w}_j ) + (n-2\lfloor n/2 \rfloor ) e_n \bigg)^t \in M_{m,n}(\Z_p)
\end{equation*}
satisfies the hypothesis on $\mathrm{v}$ in Lemma \ref{lemm - Vandermonde}. The proof now proceeds exactly as in the $\mathfrak{sl}_r$ case.
\end{proof}

\begin{proof}[Proof of Proposition \ref{prop - Kuranashi - p}]
Let $\mathfrak{g}$ be either $\mathfrak{sl}_r$ or $\mathfrak{so}_r$. Let $\alpha$ and $\beta$ be as in the proof of Proposition \ref{prop-Lie generator} and set $\alpha'=p\alpha$ and $\beta'=p\beta$. Since $\mathrm{ad}^j_{\alpha'}(\beta')=p^{j+1} \mathrm{ad}_{\alpha}^j(\beta)$, by  \eqref{eq-inclusion-minus}, we obtain $$L':=\sum^{\rk_{\Z_p} \mathfrak{g}^--1}_{j=0} \Z_p \mathrm{ad}_{\alpha'}^j(\beta') \supseteq p^{\rk_{\Z_p} \mathfrak{g}^-} \cdot p^{(k-1) \cdot (\rk_{\Z_p} \mathfrak{g}^--1)} \mathfrak{g}^- = p^{k \rk_{\Z_p}\mathfrak{g}^- - k +1} \mathfrak{g}^-.$$ The claim now follows from $\langle \alpha', \beta' \rangle \supset [L' \,L'] + L'$.    
\end{proof}


\section{Proof of the main theorems}

With the Lie algebra generators established in the previous section, we are now ready to prove the main theorems.

\begin{proof}[Proof of Theorem \ref{theo-sl}]
Let $p \geq 3$ be a prime and $r \geq 2$ an integer. For $a \in \Z$ satisfying  $\epsilon(r) \leq a \leq r-2$ and $a \equiv r \pmod{2}$, there exists a partition $r=m+n$ such that $m-n=a$. Let $k$ be an integer satisfying $p^k > m(m+2)(m+3)$ and set $\gamma=4kmn-2k+2$.

Let $\sl_r^{\pm}$ be the $\pm1$-eigenspaces of $\sl_r$ with respect to the conjugation action by $d_{m,n}$. By Proposition \ref{prop - Kuranashi - p} $(1)$, there exist $\alpha \in p \sl_r^+$ and $\beta \in p \sl_r^-$ such that the $\Z_p$-Lie subalgebra $\langle \alpha, \beta \rangle$ of $\sl_r^1$ contains $\sl^{\gamma}_r$. Set $g_1=\exp(\alpha), g_2=\exp(\beta)$, and let $H$ be the closed subgroup of $\Sl^1_r(\Z_p)$ generated by $g_1$ and $g_2$. Then $(g_1, g_2)$ is a Kuranishi pair of $\Sl_r(\Z_p)$. Moreover, we have $$H \supseteq\exp(\sl^{\gamma}_r)=\Sl_r^{\gamma}(\Z_p).$$

Now take $K=\Q(\sqrt{-p})$. Since $p\nmid h_K$ by Example~\ref{example}$,$ and $2$ is ramified in $K/\Q$ if and only if $p\equiv3\pmod4$, Theorem~\ref{theorem_4.3_bis} yields statements $(1)$ and $(2)$.

The first assertion of $(3)$ follows from the fact that, for any complex conjugation $c \in G_{\Q}$, we have $\rho \otimes \omega(c) = -\rho(c)$, and hence $${\rm tr}(\rho \otimes \omega(c))=-{\rm tr}(\rho(c))=-{\rm tr}(A)=-a.$$ For the second assertion, note that every open subgroup of $\Sl_r(\Z_p)$ has finite abelianization. Hence for some open subgroup $U$ of $\Gal(K_p/\Q)$, we have $(\rho \otimes \omega)(U) \cong \rho(U) \times \Z_p$. It follows that $\mathrm{im}(\rho \otimes \omega) \subset \Gl_r(\Z_p)$ has dimension $r^2$ as a $p$-adic analytic group. Since this coincides with the dimension of $\Gl_r(\Z_p)$, the image $\mathrm{im}(\rho \otimes \omega)$ is open in $\Gl_r(\Z_p)$.
 \end{proof}

\begin{proof}[Proof of Theorem \ref{theo-so}]
The proof is obtained by adapting the argument of Theorem \ref{theo-sl} to $\so_r$. Let $m+n=r$ with $m-n=a$, and let $k \in \NN$ satisfy
\begin{equation*}
p^k> \frac{3n(m+2)}{2} + m.   
\end{equation*}
Set $\gamma:=2kmn-2k+2$. By Proposition~\ref{prop - Kuranashi - p} $(2)$ applied to $d_{m,n}$, there exist $\alpha \in p \so_r^+$ and $\beta \in p \so_r^-$ such that the $\Z_p$-Lie subalgebra $\langle \alpha, \beta \rangle$ contains $\so^{\gamma}_r$. Passing to the exponential map, the closed subgroup of $\mathrm{SO}^1_r(\Z_p)$ generated by $\exp(\alpha)$ and $\exp(\beta)$ contains the open subgroup $\mathrm{SO}_r^\gamma(\Z_p)$ of $\SO_r(\Z_p)$. The remainder of the proof is identical to that of Theorem \ref{theo-sl}. Note that $A=d_{m,n}$ has order $2$ whereas $\SO_r^1(\Z_p)$ is torsion-free, being uniform. Hence, $\SO_r^1(\Z_p) \rtimes A$ embeds naturally into $\OO_r(\Z_p)$, and the image of Galois representation $\rho : G_\Q \to \SO_r^1(\Z_p) \rtimes \langle A \rangle$ constructed above is contained in $\OO_r(\Z_p)$.
\end{proof}

\begin{rema}
In all previous works on non-geometric Galois representations, the resulting Galois representations are regular at infinity. For the constructions in \cite{Ray, Maletto, Ray_2, Tang}, this can be checked from the description of the residual mod $p$ representation: see \S2.1 of \cite{Ray}, Theorem~4.3(1) of \cite{Maletto}, Theorem~3.3(2) of \cite{Ray_2}, and Condition~(3) of Theorem~3.10 in \cite{Tang}. In \cite{Greenberg}, the action of $\Delta'$ on $H$ is given, in our notation, by conjugation by a diagonal matrix whose diagonal entries alternate in parity (cf.~Proposition~6.1). A similar argument appears in \S3.4 of \cite{Cornut-Ray}, while the corresponding argument in \cite{Maire} can be found in \S4.2.
\end{rema}


\bibliography{ref}
\bibliographystyle{amsplain}

\end{document}